\newtheorem{theorem}{Theorem}[section]
\newtheorem{lemma}[theorem]{Lemma}
\newtheorem{corollary}[theorem]{Corollary}
\newtheorem{claim}{Claim}
\begin{document}

\onehalfspace

\title{Vaccinate your trees!}

\author{Stefan Ehard \and Dieter Rautenbach}

\date{}

\maketitle

\begin{center}
Institut f\"{u}r Optimierung und Operations Research, 
Universit\"{a}t Ulm, Ulm, Germany,
\{\texttt{stefan.ehard,dieter.rautenbach}\}\texttt{@uni-ulm.de}\\[3mm]
\end{center}

\begin{abstract}
For a graph $G$ 
and an integer-valued function $\tau$ on its vertex set,
a dynamic monopoly is a set of vertices of $G$
such that iteratively adding to it vertices $u$ of $G$ 
that have at least $\tau(u)$ neighbors in it
eventually yields the vertex set of $G$.
We study two vaccination problems, 
where the goal is to maximize 
the minimum order of such a dynamic monopoly 
\begin{itemize}
\item either by increasing the threshold value of $b$ vertices beyond their degree,
\item or by removing $b$ vertices from $G$,
\end{itemize}
where $b$ is a given non-negative integer corresponding to a budget.
We show how to solve these problems efficiently for trees.
\end{abstract}

{\small 
\begin{tabular}{lp{13cm}}
{\bf Keywords:} Dynamic monopoly; vaccination
\end{tabular}
}


\section{Introduction}

Dynamic monopolies are a popular graph-theoretic model for spreading processes.
In a simple yet natural model \cite{keklta,drro,dori}, 
every vertex $u$ of a graph $G$ has a threshold value $\tau(u)$ 
and will be reached by the spreading process 
if at least $\tau(u)$ of its neighbors have been reached.
A set $D$ of vertices is a dynamic monopoly 
if starting the spreading process from the vertices in $D$,
eventually all vertices of $G$ will be reached.
Finding the minimum order ${\rm dyn}(G,\tau)$ of a dynamic monopoly 
is a very hard problem \cite{ch,cedoperasz}.
Some general bounds are known \cite{acbewo,gera,chly} 
but efficient algorithms have only been found 
for restricted instances that essentially possess tree structure \cite{cicogamipeva,ch,cedoperasz,chhuliwuye},
in fact, the tractability of dynamic monopolies appears to be
closely related to the boundedness of the treewidth \cite{behelone}.

The parameter ${\rm dyn}(G,\tau)$ measures a vulnerability of $(G,\tau)$ with respect to pandemic spreading processes.
In the present paper we study two vaccination problems 
corresponding to the reduction of this vulnerability 
subject to a budget constraint.
For a non-negative integer $b$ quantifying our {\it budget},
we want to maximize ${\rm dyn}(G,\tau)$ 
\begin{itemize}
\item[(\ref{ed1})] either by increasing the threshold value of $b$ vertices beyond their degree,
\item[(\ref{ed1b})] or by removing $b$ vertices from $G$.
\end{itemize}
In (\ref{ed1}), the $b$ vertices become immune 
against the infection by the spreading process; 
they can never be reached by the process 
unless they belong to the set from which the spreading starts,
that is, 
every dynamic monopoly for the modified threshold function 
has to contain them.
In (\ref{ed1b}), the $b$ vertices no longer participate in the spreading process at all.

In order to explain our results and discuss related work, we introduce some notation. 
Let $G$ be a finite, simple, and undirected graph.
A {\it threshold function} for $G$ is a function $\tau:U\to \mathbb{Z}\cup \{ \infty\}$ whose domain $U$ contains the vertex set $V(G)$ of $G$.
Let $\tau$ be a threshold function for $G$.
For a set $D$ of vertices of $G$,
the {\it hull $H_{(G,\tau)}(D)$ of $D$ in $(G,\tau)$}
is the smallest set $H$ of vertices of $G$ such that 
$D\subseteq H$, and 
$u\in H$ for every vertex $u$ of $G$ with $|H\cap N_G(u)|\geq \tau (u)$.
Clearly, the set $H_{(G,\tau)}(D)$ is obtained 
by starting with $D$, and 
iteratively adding vertices $u$ 
that have at least $\tau(u)$ neighbors in the current set
as long as possible.
The set $D$ is a {\it dynamic monopoly of $(G,\tau)$}
if $H_{(G,\tau)}(D)$ equals the vertex set of $G$.
Let ${\rm dyn}(G,\tau)$ be the minimum order 
of a dynamic monopoly of $(G,\tau)$.
A dynamic monopoly of $(G,\tau)$ of order ${\rm dyn}(G,\tau)$ is {\it minimum}.
Note that every dynamic monopoly $D$ of $(G,\tau)$ 
necessarily contains each vertex $u$ of $G$ with $d_G(u)+1\leq \tau(u)\leq \infty$,
where $d_G:V(G)\to \mathbb{N}_0$ is the degree function.

To formulate problem (\ref{ed1}), 
we need to manipulate the threshold function $\tau$.
For a set $X$, 
let $\tau_X:U\to \mathbb{Z}\cup \{ \infty\}$ be such that 
$$\tau_X(u)=
\begin{cases}
\tau(u) & \mbox{, if $u\in U\setminus X$, and}\\
\infty & \mbox{, if $u\in U\cap X$.}
\end{cases}$$
Our two vaccination problems (\ref{ed1}) and (\ref{ed1b}) 
can now be written as follows:
\begin{eqnarray}
{\rm vacc}_1(G,\tau,b)&=&\max\left\{ {\rm dyn}(G,\tau_X):X\in {V(G)\choose b}\right\}\mbox{ and }\label{ed1}\\[3mm]
{\rm vacc}_2(G,\tau,b)&=&\max\left\{ {\rm dyn}(G-Y,\tau):Y\in {V(G)\choose b}\right\}\label{ed1b}
\end{eqnarray}
for a given triple $(G,\tau,b)$, where $b$ is a non-negative integer, 
and ${U \choose k}$ denotes the set of all $k$-element subsets of $U$.
Our contribution are efficient algorithms computing
${\rm vacc}_1(T,\tau,b)$
and
${\rm vacc}_2(T,\tau,b)$
for a given triple $(T,\tau,b)$, where $T$ is a tree.

Since (\ref{ed1}) and (\ref{ed1b}) are defined by max-min-expressions, 
where already the inner minimization problem is hard,
it is not surprising that ${\rm vacc}_1(G,\tau,b)$ and ${\rm vacc}_2(G,\tau,b)$ are also hard.
In fact, ${\rm vacc}_1(G,\tau,0)={\rm vacc}_2(G,\tau,0)={\rm dyn}(G,\tau)$,
and all hardness results for ${\rm dyn}(G,\tau)$ 
immediately carry over to these new parameters.
If the order $n(G)$ of $G$ is less than $b$, 
then ${\rm vacc}_1(G,\tau,b)={\rm vacc}_2(G,\tau,b)=\max\emptyset=-\infty$.

Before we proceed to our results, we discuss some related work.

Khoshkhah and Zaker \cite{khza} consider the problem to determine
\begin{eqnarray}\label{ed2}
\max\left\{
{\rm dyn}(G,\tilde{\tau}):
\tilde{\tau}:V(G)\to \mathbb{Z}
\mbox{ such that }
0\leq \tilde{\tau} \leq d_G
\mbox{ and }\sum\limits_{u\in V(G)}\tilde{\tau}(u)\leq b\right\},
\end{eqnarray}
for a given graph $G$ and non-negative integer $b$,
where inequalities between functions are meant pointwise. 
They show the hardness of this problem for planar graphs,
and describe an efficient algorithm for trees.
Centeno and Rautenbach \cite{cera} provide upper bounds on (\ref{ed2}) for general graphs.
\cite{cera,khza} also contain results concerning a variant of (\ref{ed2}), 
where ``$\tilde{\tau}\leq d_G$'' is replaced by ``$\tilde{\tau}\leq d_G+1$'',
and closed formulas are obtained in some cases.
Whereas our problems (\ref{ed1}) and (\ref{ed1b}) model a {\it complete immunization} against infection,
problem (\ref{ed2}) models a {\it partial immunization},
which for $\tilde{\tau}\leq d_G$ can not be complete.
Furthermore, 
whereas we consider a given initial threshold function $\tau$ 
as a lower bound for $\tau_X$ in (\ref{ed1}), 
the problem (\ref{ed2}) uses $0$ as a lower bound for $\tilde{\tau}$,
that is, the corresponding initial threshold would be $0$ everywhere.
Replacing ``$0\leq \tilde{\tau}$'' by ``$\tau\leq \tilde{\tau}$'' within (\ref{ed2}) 
for a given function $\tau$, 
leads to a much harder problem 
to which the methods from \cite{khza,cera} do not seem to apply.

In \cite{bhkllerosh} Bhawalkar, Kleinberg, Lewi, Roughgarden, and Sharma 
study so-called anchored $k$-cores.
For a given graph $G$, and a positive integer $k$, 
the {\it $k$-core} of $G$ is 
the largest induced subgraph of $G$ of minimum degree at least $k$.
It is easy to see that the vertex set of the $k$-core of $G$
equals $V(G)\setminus H_{(G,\tau)}(\emptyset)$
for the special threshold function $\tau=d_G-k+1$.
Now, the {\it anchored $k$-core problem} \cite{bhkllerosh} is to determine 
\begin{eqnarray}\label{ed3}
\max\left\{
\Big|V(G)\setminus H_{(G,\tau_X)}(\emptyset)\Big|:X\in {V(G)\choose b}\right\},
\end{eqnarray}
for a given graph $G$ and non-negative integer $b$.
Bhawalkar et al. show that (\ref{ed3}) is hard to approximate in general,
but can be determined efficiently for $k=2$, and for graphs of bounded treewidth.
Clearly, (\ref{ed3}) bears less similarity with our problems
than (\ref{ed2}). It is defined by a simple max-expression, which makes it easier to handle algorithmically.
Vaccination problems in random settings were studied in \cite{keklta,brjama,de}.

The next section contains our results and their proofs.

\section{Results}

Throughout this section, 
let $T$ be a tree rooted in some vertex $r$, 
and let $\tau:U\to \mathbb{Z}\cup \{ \infty\}$ 
be a threshold function for $T$.
For a vertex $u$ of $T$ and a set $X$, 
let $T_u$ be the subtree of $T$ induced by $u$ and its descendants,
let $\tau^u:U\to \mathbb{Z}\cup \{ \infty\}$ 
be such that 
$$\tau^u(v)=
\begin{cases}
\tau(v) & \mbox{, if $v\in U\setminus \{ u\}$, and}\\
\tau(v)-1 & \mbox{, if $v=u$,}
\end{cases}$$
and let $\tau_X^u=(\tau^u)_X$.

For an integer $k$, 
let $[k]$ denote the set of positive integers at most $k$, 
and let 
$${\cal P}_k(b)=\left\{ (b_1,\ldots,b_k)\in \mathbb{N}_0^k:b_1+\cdots+b_k=b\right\}$$
be the set of ordered partitions of $b$ into $k$ non-negative integers.

We devote separate subsections to the problems (\ref{ed1}) and (\ref{ed1b}).

\subsection{Calculating ${\rm vacc}_1(T,\tau,b)$}\label{subsec1}

For a vertex $u$ of $T$ and a non-negative integer $b$,
we consider the two values
\begin{itemize}
\item $x_0(u,b)={\rm vacc}_1(T_u,\tau,b)$, and 
\item $x_1(u,b)={\rm vacc}_1\left(T_u,\tau^u,b\right)$.
\end{itemize}
Intuitively, 
$x_1(u,b)$ corresponds to a situation,
where the infection reaches the parent of $u$ before it reaches $u$,
that is, the index $0$ or $1$ indicates the amount of help
that $u$ receives from outside of $T_u$.

If $b\leq n(T_u)$, 
then let $X_0(u,b)$ and $X_1(u,b)$ in ${V(T)\choose b}$ 
be such that 
\begin{eqnarray*}
x_0(u,b)&=&{\rm dyn}\left(T_u,\tau_{X_0(u,b)}\right)\mbox{ and }\\
x_1(u,b)&=&{\rm dyn}\left(T_u,\tau^u_{X_1(u,b)}\right),
\end{eqnarray*}
where, if possible, let $X_0(u,b)=X_1(u,b)$.

\begin{lemma}\label{lemma0}
$x_0(u,b)\geq x_1(u,b)$, and if $x_0(u,b)=x_1(u,b)$, then $X_0(u,b)=X_1(u,b)$.
\end{lemma}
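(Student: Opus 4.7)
The plan is to reduce the entire lemma to a single monotonicity principle: decreasing thresholds never increases the minimum dynamic monopoly. Concretely, I would first establish the auxiliary fact that for any graph $G$ and any two threshold functions $\tau',\tau''$ with $\tau'\leq \tau''$ pointwise, one has ${\rm dyn}(G,\tau')\leq {\rm dyn}(G,\tau'')$. This is immediate from the definition of the hull operator: if $D$ is a dynamic monopoly for $(G,\tau'')$, then in the iterative process for $(G,\tau')$ every vertex $u$ that gets added in the $\tau''$-process still satisfies $|H\cap N_G(u)|\geq \tau''(u)\geq \tau'(u)$, so $D$ is also a dynamic monopoly for $(G,\tau')$.

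Next I would verify the pointwise inequality $\tau^u_X\leq \tau_X$ for every set $X$. The two functions agree on every vertex other than $u$; at $u$, if $u\in X$ then both values are $\infty$, and if $u\notin X$ then $\tau^u_X(u)=\tau(u)-1<\tau(u)=\tau_X(u)$. Combining this with the monotonicity principle applied to the tree $T_u$ gives
\[
{\rm dyn}\bigl(T_u,\tau^u_X\bigr)\leq {\rm dyn}\bigl(T_u,\tau_X\bigr)
\]
for every $X\in \binom{V(T_u)}{b}$. Taking the maximum over all such $X$ on both sides yields $x_1(u,b)\leq x_0(u,b)$, which is the first assertion.

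For the second assertion, suppose $x_0(u,b)=x_1(u,b)$. Applying the displayed inequality to the specific set $X=X_1(u,b)$ gives
\[
x_0(u,b)=x_1(u,b)={\rm dyn}\bigl(T_u,\tau^u_{X_1(u,b)}\bigr)\leq {\rm dyn}\bigl(T_u,\tau_{X_1(u,b)}\bigr)\leq x_0(u,b),
\]
where the last inequality is the defining maximum property of $x_0(u,b)$. Hence ${\rm dyn}(T_u,\tau_{X_1(u,b)})=x_0(u,b)$, so $X_1(u,b)$ is itself a valid choice for $X_0(u,b)$; by the ``if possible'' clause in the definition of $X_0(u,b)$, we may therefore take $X_0(u,b)=X_1(u,b)$.

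There is essentially no obstacle here; the only thing that could cause trouble is an edge case where $b>n(T_u)$, in which case both values are $-\infty$ and the statement is vacuous, and the ``if possible'' convention for $X_0(u,b)$ makes the equal-choice claim unambiguous.
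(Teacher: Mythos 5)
Your proposal is correct and follows essentially the same route as the paper: both arguments rest on the observation that a dynamic monopoly for the larger threshold function $\tau_{X}$ is also one for $\tau^u_{X}$, applied at the maximizer $X_1(u,b)$, and both conclude the equal-choice claim from the ``if possible'' convention. You merely make the pointwise monotonicity principle explicit where the paper uses it implicitly; no substantive difference.
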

\begin{proof}
If $x_1(u,b)=-\infty$, then the statement is trivial.
Hence, we may assume that $x_1(u,b)>-\infty$,
which implies that the set $X_1(u,b)$ is defined.
Let $D$ be a minimum dynamic monopoly of $\left(T_u,\tau_{X_1(u,b)}\right)$.
By the definition of ${\rm vacc}_1(T_u,\tau,b)$, we have $x_0(u,b)\geq |D|$.
Since $D$ is a dynamic monopoly of $\left(T_u,\tau_{X_1(u,b)}^u\right)$,
we obtain 
$x_0(u,b)\geq |D|\geq {\rm dyn}\left(T_u,\tau_{X_1(u,b)}^u\right)=x_1(u,b)$.
Furthermore, if $x_0(u,b)=x_1(u,b)$, then 
$x_0(u,b)=|D|={\rm dyn}\left(T_u,\tau_{X_1(u,b)}\right)$,
which implies $X_0(u,b)=X_1(u,b)$.
\end{proof}

\begin{lemma}\label{lemma1}
If $u$ is a leaf of $T$, then, for $j\in \{ 0,1\}$
\begin{eqnarray*}
x_j(u,b)&=&
\begin{cases}
0 &\mbox{, if $\tau(u)\leq j$ and $b=0$,}\\
-\infty &\mbox{, if $b\geq 2$, and}\\
1 &\mbox{, otherwise, and}
\end{cases}\\[5mm]
X_j(u,b) & = &  
\begin{cases}
\emptyset &\mbox{, if $b=0$, and}\\
\{ u\} &\mbox{, if $b=1$.}
\end{cases}
\end{eqnarray*}
\end{lemma}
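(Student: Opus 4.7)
The plan is a direct verification by case analysis on $b$, exploiting the fact that $u$ being a leaf makes $T_u$ trivial. First I would record the structural reduction: since $u$ is a leaf, $T_u$ is the one-vertex tree on $\{u\}$ with no edges, so $N_{T_u}(u)=\emptyset$ and every hull computation in $T_u$ boils down to comparing $0=|N_{T_u}(u)\cap H|$ with the threshold assigned to $u$. Thus the whole statement is governed by one number, namely the threshold at $u$ under the modified function, which is $\tau(u)$ if $j=0$ and $\tau(u)-1$ if $j=1$, possibly overridden by $\infty$ when $u\in X$.

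Next I would split on the size of $b$. For $b\geq 2$ there is no $b$-element subset of $V(T_u)=\{u\}$, so the maximum in the definition of ${\rm vacc}_1$ is taken over the empty family and equals $-\infty$, matching the second branch of the formula (and making $X_j(u,b)$ correctly undefined). For $b=0$ the only admissible $X$ is $\emptyset$, and the threshold at $u$ remains $\tau(u)-j$; the empty set is a dynamic monopoly precisely when $\tau(u)-j\leq 0$, giving $x_j(u,0)=0$, and otherwise the only dynamic monopoly is $\{u\}$, giving $x_j(u,0)=1$. For $b=1$ the only admissible $X$ is $\{u\}$, which sets the threshold at $u$ to $\infty$ and forces $u$ into every dynamic monopoly, so $x_j(u,1)=1$. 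This reproduces exactly the three cases $0$, $-\infty$, and $1$ in the displayed formula.

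The values of $X_j(u,b)$ then read off trivially from the arguments above: $X_j(u,0)=\emptyset$ and $X_j(u,1)=\{u\}$ are the only choices in either case, so the tie-breaking rule $X_0(u,b)=X_1(u,b)$ from the definition is automatically satisfied. There is no real obstacle in the proof; the only thing that needs care is the bookkeeping of the \emph{otherwise} branch, in particular confirming that $b=1$ always falls into it regardless of the value of $\tau(u)$, and that for $b=0$ the case $\tau(u)>j$ is the one captured by \emph{otherwise}. Once these correspondences are made explicit, the lemma follows.
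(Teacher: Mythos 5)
Your verification is correct and is simply the explicit case analysis ($b\geq 2$ empty family, $b=0$ forces $X=\emptyset$, $b=1$ forces $X=\{u\}$) behind the paper's one-line proof that the equalities ``follow immediately from the definitions.'' Same approach, just written out.
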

\begin{proof}
These equalities follow immediately from the definitions.
\end{proof}
We have observed that 
$X_0(u,b) = X_1(u,b)$
if $x_0(u,b)=x_1(u,b)$ or $u$ is a leaf.
A surprising key insight that is essential for our approach 
is that $$X_0(u,b) = X_1(u,b)$$ 
always holds, 
which will follow by an inductive argument based on the following lemma.

\begin{lemma}\label{lemma2}
Let $u$ be a vertex of $T$ that is not a leaf, 
and let $b$ be a non-negative integer.

If $v_1,\ldots,v_k$ are the children of $u$,
and $X_0(v_i,b_i)=X_1(v_i,b_i)$ for every $i\in [k]$ and 
every non-negative integer $b_i$ at most $n(T_{v_i})$, 
then, for $j\in \{ 0,1\}$,
\begin{eqnarray}
x_j(u,b) &=& \max\Big\{ z(u,b),z_j(u,b)\Big\}\mbox{, and}\label{e1}\\
X_0(u,b) & = & X_1(u,b) \mbox{ if $b\leq n(T_u)$,}\label{e3}
\end{eqnarray}
where
\begin{eqnarray*}
z(u,b) &=& \max\left\{ 1+\sum\limits_{i=1}^kx_1(v_i,b_i):(b_1,\ldots,b_k)\in {\cal P}_k(b-1)\right\},\\
z_j(u,b) &=& \max\left\{ \delta_j(b_1,\ldots,b_k)+\sum\limits_{i=1}^kx_1(v_i,b_i):(b_1,\ldots,b_k)\in {\cal P}_k(b)\right\},
\end{eqnarray*}
and, for $(b_1,\ldots,b_k)\in {\cal P}_k(b)$,
\begin{eqnarray*}
\delta_j(b_1,\ldots,b_k)&=&
\begin{cases}
0 &\mbox{, if $\Big|\Big\{ i\in [k]:x_0(v_i,b_i)=x_1(v_i,b_i)\Big\}\Big|\geq \tau(u)-j$, and}\\
1 &\mbox{, otherwise.}
\end{cases}
\end{eqnarray*}
\end{lemma}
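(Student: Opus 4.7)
The plan is to prove (\ref{e1}) by partitioning every $X\in{V(T_u)\choose b}$ according to whether $u\in X$ or $u\notin X$, and to deduce (\ref{e3}) by exhibiting a single $X$ that witnesses both $x_0(u,b)$ and $x_1(u,b)$. Throughout write $\tau^{(0)}=\tau$ and $\tau^{(1)}=\tau^u$, and for each $X$ set $X_i=X\cap V(T_{v_i})$, $b_i=|X_i|$, $\alpha_i={\rm dyn}(T_{v_i},\tau^{v_i}_{X_i})$, and $\beta_i={\rm dyn}(T_{v_i},\tau_{X_i})$.

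For $X$ with $u\in X$, the condition $\tau^{(j)}_X(u)=\infty$ forces $u$ into every dynamic monopoly independently of $j$. Since $u$ lies in the hull and helps every child $v_i$, a minimum dynamic monopoly decomposes as $\{u\}\cup\bigcup_i D_i$ with $D_i$ a minimum dynamic monopoly of $(T_{v_i},\tau^{v_i}_{X_i})$, so ${\rm dyn}(T_u,\tau^{(j)}_X)=1+\sum_i\alpha_i$. Bounding each $\alpha_i$ by $x_1(v_i,b_i)$ and choosing $X_i=X_1(v_i,b_i)$ for an optimal $(b_1,\ldots,b_k)\in\mathcal{P}_k(b-1)$ gives matching upper and lower bounds $z(u,b)$ for $\max_{X:u\in X}{\rm dyn}(T_u,\tau^{(j)}_X)$.

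For $X$ with $u\notin X$, a further split on whether a minimum dynamic monopoly $D$ contains $u$ yields sizes $1+\sum\alpha_i$ and $\sum\alpha_i+\min_{|I|\geq\tau(u)-j}\sum_{i\in I}(\beta_i-\alpha_i)$ respectively; the constraint reflects that at least $\tau(u)-j$ children must be reached in $(T_{v_i},\tau_{X_i})$ without $u$'s help, which happens iff $D_i$ is a dynamic monopoly of $(T_{v_i},\tau_{X_i})$. Using $\alpha_i\leq x_1(v_i,b_i)$ together with the observation that at indices $i$ with $x_0(v_i,b_i)=x_1(v_i,b_i)$ we have $\beta_i\leq x_0(v_i,b_i)=x_1(v_i,b_i)$, a direct estimation shows the minimum of the two sizes is at most $\delta_j(b_1,\ldots,b_k)+\sum x_1(v_i,b_i)\leq z_j(u,b)$. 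Conversely, taking $X_i=X_1(v_i,b_i)=X_0(v_i,b_i)$ (by the inductive hypothesis) for an optimal $(b_1,\ldots,b_k)\in\mathcal{P}_k(b)$ gives $\alpha_i=x_1(v_i,b_i)$ and $\beta_i=x_0(v_i,b_i)$, so the expression evaluates to exactly $\delta_j(b_1,\ldots,b_k)+\sum x_1(v_i,b_i)=z_j(u,b)$.

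To prove (\ref{e3}), Lemma~\ref{lemma0} handles $x_0(u,b)=x_1(u,b)$, so assume $x_0>x_1$, which by (\ref{e1}) forces $x_0=z_0>z$. A submodularity-style argument — adding one vertex to any vaccination grows the minimum dynamic monopoly by at most one — yields $x_1(v_i,b_i+1)-x_1(v_i,b_i)\leq 1$ and therefore $z_0(u,b)\leq z(u,b)+1$. Combined with the pointwise inequality $\delta_1\leq\delta_0$, a short case analysis on $(\delta_0,\delta_1)\in\{0,1\}^2$ at an optimizer of $z_0$ shows one can select a common optimizer $(b_1^*,\ldots,b_k^*)$ of $z_0$ and $z_1$ satisfying $z_1\geq z$ (so $x_1=z_1$). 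Setting $X=\bigcup_i X_1(v_i,b_i^*)$ with $u\notin X$ then yields ${\rm dyn}(T_u,\tau_X)=z_0=x_0$ and ${\rm dyn}(T_u,\tau^u_X)=z_1=x_1$, establishing (\ref{e3}). The main obstacle is this final case analysis, which must rule out the apparently problematic scenario $z_1<z<z_0$ by showing it is incompatible with $z_0\leq z+1$.
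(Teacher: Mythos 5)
Your proof follows essentially the same route as the paper's: the same split on $u\in X$ versus $u\notin X$ (and, within the latter, on whether $u$ lies in the minimum dynamic monopoly), the same use of the hypothesis $X_0(v_i,b_i)=X_1(v_i,b_i)$ to bound the values $\beta_i$ at indices where $x_0(v_i,b_i)=x_1(v_i,b_i)$, and the same final construction of a single set $X=\bigcup_i X_1(v_i,b_i^*)$ at a common optimizer of $z_0$ and $z_1$ that witnesses both $x_0(u,b)$ and $x_1(u,b)$. One small remark: the scenario $z_1<z<z_0$ is ruled out not by your inequality $z_0(u,b)\le z(u,b)+1$ (which on its own does not suffice, and fails for $b=0$ where $z(u,0)=-\infty$) but by the pointwise bound $\delta_0\le\delta_1+1$, which yields $z_0\le z_1+1$ and hence, together with $z<z_0$, gives $z\le z_1$ directly.
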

\begin{proof}
By symmetry, it suffices to consider the case $j=0$.

First, suppose that $b>n(T_u)$, which implies $x_0(u,b)=-\infty$.
Note that $n(T_u)=1+\sum\limits_{i=1}^kn(T_{v_i})$.
Therefore, 
if $(b_1,\ldots,b_k)\in {\cal P}_k(b-1)\cup {\cal P}_k(b)$,
then $b_i>n(T_{v_i})$ for some $i\in [k]$.
This implies $z(u,b)=z_0(u,b)=-\infty$,
and, hence, $\max\{ z(u,b),z_0(u,b)\}=-\infty=x_0(u,b)$.

Now, let $b\leq n(T_u)$, which implies $x_0(u,b)>-\infty$.
The following three claims complete the proof of (\ref{e1}).

\begin{claim}\label{claim1}
$x_0(u,b)\geq z(u,b)$.
\end{claim}
\begin{proof}[Proof of Claim \ref{claim1}]
It suffices to show 
$x_0(u,b)\geq 1+\sum\limits_{i=1}^kx_1(v_i,b_i)$ 
for every $(b_1,\ldots,b_k)$ in ${\cal P}_k(b-1)$
with $x_1(v_i,b_i)>-\infty$ for every $i\in [k]$.
Let $(b_1,\ldots,b_k)$ be such an element of ${\cal P}_k(b-1)$.

Since $x_1(v_i,b_i)>-\infty$, the set $X_1(v_i,b_i)$ is defined for every $i\in [k]$.

Let $X=\{ u\}\cup \bigcup\limits_{i=1}^kX_1(v_i,b_i)$.
Since $|X|=b$, we have $x_0(u,b)\geq {\rm dyn}(T_u,\tau_X)$.

Let $D$ be a minimum dynamic monopoly of $(T_u,\tau_X)$.
Since $u\in X$, we have $u\in D$.
For each $i\in [k]$, 
it follows that $D_i=D\cap V(T_{v_i})$ is a dynamic monopoly of 
$\left(T_{v_i},\tau_X^{v_i}\right)$.
Since, restricted to $V(T_{v_i})$, 
the two functions $\tau_X^{v_i}$
and $\tau_{X_1(v_i,b_i)}^{v_i}$ coincide,
we obtain $|D_i|\geq {\rm dyn}\left(T_{v_i},\tau_{X_1(v_i,b_i)}^{v_i}\right)=x_1(v_i,b_i)$.
Altogether, we conclude
$$x_0(u,b)\geq |D|
=1+\sum\limits_{i=1}^k|D_i|
\geq 1+\sum\limits_{i=1}^kx_1(v_i,b_i).$$
\end{proof}

\begin{claim}\label{claim2}
$x_0(u,b)\geq z_0(u,b)$.
\end{claim}
\begin{proof}[Proof of Claim \ref{claim2}]
Again, it suffices to show 
$x_0(u,b)\geq \delta_0(b_1,\ldots,b_k)+\sum\limits_{i=1}^kx_1(v_i,b_i)$ 
for every $(b_1,\ldots,b_k)$ in ${\cal P}_k(b)$
with $x_1(v_i,b_i)>-\infty$ for every $i\in [k]$.
Let $(b_1,\ldots,b_k)$ be such an element of ${\cal P}_k(b)$.

Since $x_1(v_i,b_i)>-\infty$, the set $X_1(v_i,b_i)$ is defined for every $i\in [k]$.

Let $X=\bigcup\limits_{i=1}^kX_1(v_i,b_i)$.
Since $|X|=b$, we have $x_0(u,b)\geq {\rm dyn}(T_u,\tau_X)$.

Let $D$ be a minimum dynamic monopoly of $(T_u,\tau_X)$,
that is, $|D|\leq x_0(u,b)$.
For each $i\in [k]$,
it follows that $D_i=D\cap V(T_{v_i})$ 
is a dynamic monopoly of $\left(T_{v_i},\tau_X^{v_i}\right)$,
which implies $|D_i|\geq x_1(v_i,b_i)$
as in the proof of Claim \ref{claim1}.

If $\delta_0(b_1,\ldots,b_k)=0$,
then $|D|\geq \sum\limits_{i=1}^k|D_i|\geq \delta_0(b_1,\ldots,b_k)+\sum\limits_{i=1}^kx_1(v_i,b_i)$.
Similarly, if $u\in D$, then 
$|D|=1+\sum\limits_{i=1}^k|D_i|
\geq \delta_0(b_1,\ldots,b_k)+\sum\limits_{i=1}^kx_1(v_i,b_i)$.
Therefore, we may assume that 
$\delta_0(b_1,\ldots,b_k)=1$ and that $u\not\in D$.
This implies that there is some $\ell\in [k]$ with $x_0(v_\ell,b_\ell)>x_1(v_\ell,b_\ell)$
such that $D_\ell$ is a dynamic monopoly of $\left(T_{v_\ell},\tau_X\right)$.
Since $X_0(v_\ell,b_\ell)=X_1(v_\ell,b_\ell)$, we obtain that, restricted to $V(T_{v_\ell})$,
the two functions $\tau_X$ and $\tau_{X_0(v_\ell,b_\ell)}$ coincide,
which implies
$|D_\ell|\geq 
{\rm dyn}\left(T_{v_\ell},\tau_{X_0(v_\ell,b_\ell)}\right)=x_0(v_\ell,b_\ell)\geq 1+x_1(v_\ell,b_\ell)$.
Therefore, also in this case,
$|D|=|D_\ell|+\sum\limits_{i\in [k]\setminus \{\ell\}}|D_i|
\geq \delta_0(b_1,\ldots,b_k)+\sum\limits_{i=1}^kx_1(v_i,b_i)$.
\end{proof}

\begin{claim}\label{claim3}
$x_0(u,b)\leq z(u,b)$ or $x_0(u,b)\leq z_0(u,b)$.
\end{claim}
\begin{proof}[Proof of Claim \ref{claim3}]
Let $X=X_0(u,b)$, that is, $x_0(u,b)={\rm dyn}(T_u,\tau_X)$.
Let $b_i=|X\cap V(T_{v_i})|$ for every $i\in [k]$.
Let $D_i$ be a minimum dynamic monopoly of 
$\left(T_{v_i},\tau^{v_i}_X\right)$
for every $i\in [k]$.
By the definition of $x_1(v_i,b_i)$, 
we obtain $|D_i|\leq x_1(v_i,b_i)$.
Let $D=\{ u\}\cup \bigcup\limits_{i=1}^kD_i$.
The set $D$ is a dynamic monopoly of $(T_u,\tau_X)$, which implies $x_0(u,b)\leq |D|$.

First, suppose that $u\in X$.
This implies $(b_1,\ldots,b_k)\in {\cal P}_k(b-1)$,
and, hence,
$$x_0(u,b)\leq |D|=1+\sum\limits_{i=1}^k|D_i|
\leq 1+\sum\limits_{i=1}^kx_1(v_i,b_i)\leq z(u,b).$$
Next, suppose that $u\not\in X$,
which implies $(b_1,\ldots,b_k)\in {\cal P}_k(b)$.

If $\delta_0(b_1,\ldots,b_k)=1$,
then 
$$x_0(u,b)
\leq |D|
=1+\sum\limits_{i=1}^k|D_i|
\leq \delta_0(b_1,\ldots,b_k)+\sum\limits_{i=1}^kx_1(v_i,b_i)
\leq z_0(u,b).$$
Therefore, we may assume that $\delta_0(b_1,\ldots,b_k)=0$.
By symmetry, we may assume that 
$x_0(v_i,b_i)=x_1(v_i,b_i)$ for every $i\in [\tau(u)]$.
Let $D_i'$ be a minimum dynamic monopoly of 
$\left(T_{v_i},\tau_X\right)$ for every $i\in [\tau(u)]$.
By the definition of $x_0(v_i,b_i)$, 
we obtain $|D'_i|\leq x_0(v_i,b_i)=x_1(v_i,b_i)$.
Let $D'=\bigcup\limits_{i\in [\tau(u)]}D'_i
\cup \bigcup\limits_{i\in [k]\setminus [\tau(u)]}D_i$.
The set $D'$ is a dynamic monopoly of $(T_u,\tau_X)$.
This implies
$$x_0(u,b)\leq |D'|
=\sum\limits_{i\in [\tau(u)]}|D'_i|
+\sum\limits_{i\in [k]\setminus [\tau(u)]}|D_i|
\leq\sum\limits_{i\in [k]}x_1(v_i,b_i)
\leq z_0(u,b),$$
which completes the proof of the claim.
\end{proof}
At this point, the proof of (\ref{e1}) is complete, and it remains to show (\ref{e3}).
If $x_0(u,b)=x_1(u,b)$, then (\ref{e3}) follows from Lemma \ref{lemma0}. Hence, we may assume that $x_0(u,b)>x_1(u,b)$.
Since, by definition, 
$$\delta_1(b_1,\ldots,b_k)\leq \delta_0(b_1,\ldots,b_k)\leq \delta_1(b_1,\ldots,b_k)+1$$ 
for every $(b_1,\ldots,b_k)\in {\cal P}_k(b)$,
we obtain $z_1(u,b)\leq z_0(u,b)\leq z_1(u,b)+1$.
Together with (\ref{e1}), 
the inequality $x_0(u,b)>x_1(u,b)$ implies that 
\begin{eqnarray*}
x_0(u,b)&=&z_0(u,b)>z_1(u,b)=x_1(u,b)\mbox{ and}\\ z_1(u,b)&=&z_0(u,b)-1.
\end{eqnarray*}
Let $(b_1,\ldots,b_k)\in {\cal P}_k(b)$ be such that 
$z_0(u,b)=
\delta_0(b_1,\ldots,b_k)+\sum\limits_{i=1}^kx_1(v_i,b_i)$.

We obtain
\begin{eqnarray*}
z_1(u,b)
&\geq &
\delta_1(b_1,\ldots,b_k)+\sum\limits_{i=1}^kx_1(v_i,b_i)\\
&\geq &\delta_0(b_1,\ldots,b_k)-1+\sum\limits_{i=1}^kx_1(v_i,b_i)\\
&=&z_0(u,b)-1\\
&=&z_1(u,b),
\end{eqnarray*}
which implies 
$z_1(u,b)=
\delta_1(b_1,\ldots,b_k)+\sum\limits_{i=1}^kx_1(v_i,b_i)$,
that is, the same choice of $(b_1,\ldots,b_k)$ in ${\cal P}_k(b)$
maximizes the terms defining $z_0(u,b)$ and $z_1(u,b)$.

Since $z_0(u,b)>z_1(u,b)$,
we obtain
$\delta_1(b_1,\ldots,b_k)=0$
and 
$\delta_0(b_1,\ldots,b_k)=1$,
which implies that there are exactly $\tau(u)-1$ indices $i$ in $[k]$
with $x_0(v_i,b_i)=x_1(v_i,b_i)$.
By symmetry, we may assume that 
$x_0(v_i,b_i)=x_1(v_i,b_i)$ for $i\in [\tau(u)-1]$
and 
$x_0(v_i,b_i)>x_1(v_i,b_i)$ for $i\in [k]\setminus 
[\tau(u)-1]$.

Let $X=\bigcup\limits_{i=1}^kX_0(v_i,b_i)$.
Note that, by assumption, we have $X=\bigcup\limits_{i=1}^kX_1(v_i,b_i)$.

Let $D$ be a minimum dynamic monopoly of $(T_u,\tau_X)$.
By the definition of $x_0(u,b)$, we have $|D|\leq x_0(u,b)$.
Let $D_i=D\cap V(T_{v_i})$ for every $i\in [k]$.
Since $D_i$ is a dynamic monopoly of $(T_{v_i},\tau^{v_i}_X)$ for every $i\in [k]$, we obtain $|D_i|\geq x_1(v_i,b_i)$.
Note that either $u\in D$ or $u\not\in D$ and there is some index $\ell\in [k]\setminus [\tau(u)-1]$ such that $D\cap V(T_{v_\ell})$ is a dynamic monopoly of $(T_{v_\ell},\tau_X)$.
In the first case,
we obtain 
$$z_0(u,b)=x_0(u,b)\geq |D|
=1+\sum\limits_{i=1}^k|D_i|
\geq 1+\sum\limits_{i=1}^kx_1(v_i,b_i)
=z_0(u,b),$$
and, in the second case, 
we obtain $|D_\ell|\geq x_0(v_\ell,b_\ell)\geq x_1(v_\ell,b_\ell)+1$, and, hence,
$$z_0(u,b)=x_0(u,b)\geq |D|
=|D_\ell|+\sum\limits_{i\in [k]\setminus \{ \ell\}}|D_i|
\geq 1+\sum\limits_{i=1}^kx_1(v_i,b_i)
=z_0(u,b).$$
In both cases we obtain $|D|=x_0(u,b)$,
which implies that $X_0(u,b)$ may be chosen equal to $X$.

Now, let $D^-$ be a minimum dynamic monopoly of $(T_u,\tau_X^u)$.
By the definition of $x_1(u,b)$, we have $|D^-|\leq x_1(u,b)$.
Let $D^-_i=D^-\cap V(T_{v_i})$ for every $i\in [k]$.
Since $D^-_i$ is a dynamic monopoly of $(T_{v_i},\tau^{v_i}_X)$ for every $i\in [k]$, we obtain $|D^-_i|\geq x_1(v_i,b_i)$.
Now,
$$z_1(u,b)=x_1(u,b)
\geq |D^-|
\geq \sum\limits_{i=1}^kx_1(v_i,b_i)
=z_1(u,b),$$
which implies that $|D^-|=x_1(u,b)$,
and that $X_1(u,b)$ may be chosen equal to $X$.
Altogether, 
the two sets $X_0(u,b)$ and $X_1(u,b)$ may be chosen equal, 
which implies (\ref{e3}).
\end{proof}
Applying induction 
using Lemma \ref{lemma1} und Lemma \ref{lemma2}, 
we obtain the following.

\begin{corollary}\label{corollary1}
$X_0(u,b)=X_1(u,b)$ for every vertex $u$ of $T$, and every non-negative integer $b$ at most $n(T_u)$.
\end{corollary}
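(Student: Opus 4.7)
The plan is a straightforward structural induction on the subtree $T_u$, measured by its order $n(T_u)$, or equivalently a bottom-up induction from the leaves of $T$ to the root $r$. All the difficulty has already been absorbed into Lemma \ref{lemma2}, so the corollary is mostly a matter of verifying that the hypothesis of that lemma is available at every internal vertex.

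For the base case, I would take $u$ to be a leaf of $T$, so that $n(T_u)=1$. Lemma \ref{lemma1} directly lists $X_j(u,b)$ for $j\in\{0,1\}$ in the only two admissible cases $b\in\{0,1\}$, and in both cases $X_0(u,b)$ and $X_1(u,b)$ are given by the same expression (namely $\emptyset$ or $\{u\}$). Thus the base of the induction holds.

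For the inductive step, I would take an internal vertex $u$ with children $v_1,\ldots,v_k$ and assume that the statement of the corollary holds for every vertex strictly below $u$ in $T$; in particular, $X_0(v_i,b_i)=X_1(v_i,b_i)$ for every $i\in[k]$ and every non-negative integer $b_i\leq n(T_{v_i})$. This is precisely the hypothesis of Lemma \ref{lemma2}, whose conclusion (\ref{e3}) then yields $X_0(u,b)=X_1(u,b)$ for every $b\leq n(T_u)$. Applying this argument from the leaves upward along $T$ covers every vertex of $T$, establishing the corollary.

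The main obstacle in proving the corollary itself is essentially nonexistent: the substantive work — showing that the witness set $X_0(u,b)$ realizing $x_0(u,b)$ can always be chosen to coincide with the witness set $X_1(u,b)$ realizing $x_1(u,b)$, even when $x_0(u,b)>x_1(u,b)$ — is precisely what Lemma \ref{lemma2} accomplishes, via the delicate analysis of the $\delta_j$ penalty terms and the case distinction based on whether $u$ belongs to a minimum dynamic monopoly. Once that lemma is in hand, the corollary is merely the observation that the inductive hypothesis it requires propagates from leaves to root along $T$.
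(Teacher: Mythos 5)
Your proof is correct and is exactly the induction the paper intends: Lemma \ref{lemma1} supplies the base case at the leaves, and the hypothesis of Lemma \ref{lemma2} is precisely the inductive hypothesis for the children, so conclusion (\ref{e3}) closes the step. The paper states this in one line ("Applying induction using Lemma \ref{lemma1} and Lemma \ref{lemma2}"), and your write-up just makes the same argument explicit.
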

Apart from the specific values of $x_0(u,b)$ and $x_1(u,b)$,
the arguments in the proof of Lemma \ref{lemma2} 
also yield feasible recursive choices for $X_0(u,b)$.
In fact, 
if 
$x_0(u,b)=z(u,b)=1+\sum\limits_{i=1}^kx_1(v_i,b_i)$
for some $(b_1,\ldots,b_k)\in {\cal P}_k(b-1)$,
then 
$$X_0(u,b)=\{ u\}\cup \bigcup\limits_{i=1}^kX_0(v_i,b_i)$$
is a feasible choice, and
if 
$x_0(u,b)=z_0(u,b)=\delta(b_1,\ldots,b_k)+\sum\limits_{i=1}^kx_1(v_i,b_i)$
for some $(b_1,\ldots,b_k)\in {\cal P}_k(b)$,
then 
$$X_0(u,b)=\bigcup\limits_{i=1}^kX_0(v_i,b_i)$$
is a feasible choice.
While the expressions in Lemma \ref{lemma2}
involve the maximization over the elements of ${\cal P}_k(b-1)$ and ${\cal P}_k(b)$, which may be exponentially large,
we now show that the values $x_0(u,b)$ and $x_1(u,b)$
can be computed efficiently.

\begin{lemma}\label{lemma3}
Let $u$ be a vertex of $T$ that is not a leaf, 
let $b$ be a non-negative integer,
and let $v_1,\ldots,v_k$ be the children of $u$.

If the values $x_1(v_i,b_i)$ are given for every $i\in [k]$ and every non-negative integer $b_i$ at most $n(T_{v_i})$,
then $x_0(u,b)$ and $x_1(u,b)$ can be computed in $O(k^2b)$ time.
\end{lemma}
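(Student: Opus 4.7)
By Lemma~\ref{lemma2}, computing $x_0(u,b)$ and $x_1(u,b)$ reduces to the evaluation of the three quantities $z(u,b)$, $z_0(u,b)$, and $z_1(u,b)$. Each is the maximum over ordered partitions of $b-1$ or $b$ into $k$ non-negative parts of a sum-separable objective $\sum_{i} x_1(v_i,b_i)$, possibly augmented by a $\{0,1\}$-valued bonus $\delta_j$ that depends only on how many of the $v_i$ are \emph{tight}, i.e.\ satisfy $x_0(v_i,b_i)=x_1(v_i,b_i)$. The plan is to evaluate these three maxima by a knapsack-style dynamic program over the children of $u$.

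Concretely, I would process the children $v_1,\dots,v_k$ in order and maintain, after the first $i$ children, a table
\[
g_i(B,c) \;=\; \max\!\Bigl\{\,\sum_{\ell=1}^{i} x_1(v_\ell,b_\ell)\,:\,(b_1,\dots,b_i)\in{\cal P}_i(B),\ \bigl|\{\ell\le i:x_0(v_\ell,b_\ell)=x_1(v_\ell,b_\ell)\}\bigr|\ge c\,\Bigr\}
\]
with $B\in\{0,\dots,b\}$ and $c\in\{0,\dots,\tau(u)\}$, where $c$ is capped at $\tau(u)\le k$. The update from $g_{i-1}$ to $g_i$ ranges over the feasible $b_i\in\{0,\dots,\min(b,n(T_{v_i}))\}$: it shifts $B$ by $b_i$, adds $x_1(v_i,b_i)$, and shifts $c$ by $1$ precisely when $b_i$ is tight for $v_i$. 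Once $g_k$ is in hand, Lemma~\ref{lemma2} yields
\[
z(u,b)=1+g_k(b-1,0),\quad z_j(u,b)=\max\bigl\{g_k(b,\tau(u)-j),\;1+g_k(b,0)\bigr\},\quad x_j(u,b)=\max\{z(u,b),z_j(u,b)\}.
\]

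The main obstacle is verifying the $O(k^2b)$ running time. The table $g_i$ has $O(kb)$ entries because the cap $c\le\tau(u)\le k$ bounds the count dimension, but a literal transcription of the recurrence would spend $O(b)$ per entry, overshooting the target by a factor of $b$. I would tighten this by processing the $O(b)$ choices of $b_i$ for each child in a single sweep over $g_{i-1}$, organising the updates so that the monotonicity of $g_{i-1}$ in $c$ (larger $c$ is more restrictive and thus yields a smaller value) collapses the tight/non-tight case distinction and each of the $O(kb)$ target cells is touched $O(1)$ times on average. This brings the update per child down to $O(kb)$, giving $O(k^2b)$ in total over the $k$ children. Correctness of the DP is then a routine induction on $i$ directly from the definition of $g_i$ and the recurrences in Lemma~\ref{lemma2}.
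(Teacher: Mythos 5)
Your dynamic program over the children, with one coordinate for the budget consumed and one for the number of \emph{tight} children (those with $x_0(v_i,b_i)=x_1(v_i,b_i)$), is exactly the paper's construction; the only structural difference is that the paper's table $M(p,p_=,b')$ records the \emph{exact} number of tight children, whereas your $g_i(B,c)$ records ``at least $c$''. That change breaks your readout formula. Writing $A$ for the maximum of $\sum_i x_1(v_i,b_i)$ over partitions with at least $\tau(u)-j$ tight children and $B$ for the maximum over partitions with fewer, the correct value is $z_j(u,b)=\max\{A,\,1+B\}$, while your expression $\max\{g_k(b,\tau(u)-j),\,1+g_k(b,0)\}$ equals $\max\{1+A,\,1+B\}$ and overshoots by $1$ whenever $A\ge B+1$. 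This really happens: let $u$ have a single leaf child $v$ with $\tau(u)=1$, $\tau(v)=0$, $b=0$. The unique partition is tight, so $\delta_0=0$, $z_0(u,0)=0$, and indeed $x_0(u,0)={\rm dyn}(T_u,\tau)=0$; your formula returns $1$. The ``$\ge c$'' table cannot express ``fewer than $\tau(u)-j$ tight children'', which is exactly what the $+1$ branch requires, so you must keep the exact count (or a second, complementary table), as the paper does when it splits the readout into $\max\{M(k,p_=,b):p_=\le\tau(u)-1\}$ and $\max\{M(k,p_=,b):p_=\ge\tau(u)\}$.

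The running-time paragraph is also not a proof. The update from $g_{i-1}$ to $g_i$ is a $(\max,+)$-convolution in the budget coordinate, and monotonicity of $g_{i-1}$ in $c$ says nothing about that coordinate; nor can it ``collapse the tight/non-tight case distinction'', since looking up $c-1$ for a non-tight allocation would let that child wrongly count as tight. The claim that each target cell is touched $O(1)$ times on average is asserted, not argued, and each cell genuinely receives $\Theta(b)$ candidates. The paper does nothing cleverer here: it evaluates each table entry by the direct $O(b')$ inner maximization, and the bound that actually feeds into Theorem~\ref{theorem1} is $O(d_T(u)^2b^2)$ per vertex for \emph{all} budgets $b'\le b$ simultaneously, obtained by building the table once and reading off every $M(k,p_=,b')$. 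If you account your DP the same way --- one table, all budgets at once --- you recover the theorem without any sweep or amortization trick; as written, neither the correctness of your recursion's output nor the claimed $O(k^2b)$ bound is established.
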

\begin{proof}
By symmetry, it suffices to consider $x_0(u,b)$. 
We explain how to efficiently compute $z_0(u,b)$;
a simplified approach works for $z(u,b)$.

For $p\in \{0\}\cup [k]$, 
an integer $p_=$, 
and $b'\in \{0\}\cup [b]$, 
let $M(p,p_=,b')$ be defined as the maximum of the expression
$\sum\limits_{i=1}^px_1(v_i,b_i)$
maximized over all
$(b_1,\ldots,b_p)\in {\cal P}_p(b')$
with 
$$p_==\Big|\Big\{ i\in [p]:x_0(v_i,b_i)=x_1(v_i,b_i)\Big\}\Big|.$$
Clearly, $M(p,p_=,b')=-\infty$ if either $p<p_=$ or $p_=<0$, 
and $M(0,0,b')=0$.

For $p\in [k]$, the value of $M(p,p_=,b')$ 
is the maximum of the following two values:
\begin{itemize}
\item The maximum of 
$$M(p-1,p_=-1,b_{\leq p-1})+x_1(v_p,b_p)$$
over all $(b_{\leq p-1},b_p)\in {\cal P}_2(b')$
with $x_0(v_p,b_p)=x_1(v_p,b_p)$, and
\item the maximum of 
$$M(p-1,p_=,b_{\leq p-1})+x_1(v_p,b_p)$$
over all $(b_{\leq p-1},b_p)\in {\cal P}_2(b')$
with $x_0(v_p,b_p)>x_1(v_p,b_p)$,
\end{itemize}
which implies that $M(p,p_=,b')$ can be determined in $O(b')$ time
given the values 
$$\mbox{$M(p-1,p_=,b_{\leq p-1})$, $M(p-1,p_=-1,b_{\leq p-1})$, $x_0(v_p,b_p)$, and $x_1(v_p,b_p)$.}$$
Altogether, the values $M(k,p_=,b)$ 
for all $p_=\in \{ 0\}\cup [k]$ can be determined in $O(k^2b)$ time.
By the definition of $\delta_0(b_1,\ldots,b_k)$,
the value of $z_0(u,b)$ equals the maximum of the two expressions
$$1+\max\Big\{ M(k,p_=,b):p_=\in \{ 0\}\cup [\tau(u)-1]\Big\}$$
and
$$\max\Big\{ M(k,p_=,b):p_=\in [k]\setminus [\tau(u)-1]\Big\},$$
which completes the proof.
\end{proof}
We proceed to our first main result.

\begin{theorem}\label{theorem1}
For a given triple $(T,\tau,b)$,
where $T$ is a tree of order $n$,
$\tau$ is a threshold function for $T$,
and $b$ is a non-negative integer at most $n$,
the value of ${\rm vacc}_1(T,\tau,b)$
as well as a set $X$ in ${V(T)\choose b}$ 
with ${\rm vacc}_1(T,\tau,b)={\rm dyn}\left(T,\tau_X\right)$
can be determined in $O(n^2b^2)$ time.
\end{theorem}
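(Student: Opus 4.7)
The plan is a bottom-up dynamic program that, for every vertex $u$ of $T$ (processed in post-order) and every non-negative integer $b'\le\min\{b,n(T_u)\}$, computes the pair $x_0(u,b'),x_1(u,b')$ together with back-pointers from which the optimizing set $X_0(u,b')$ can be recovered. Since we are rooting $T$ at $r$, the desired value is ${\rm vacc}_1(T,\tau,b)=x_0(r,b)$, and the desired set is $X=X_0(r,b)$.

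At every leaf, Lemma \ref{lemma1} supplies the base case directly. At every internal vertex $u$ with children $v_1,\ldots,v_k$, the values $x_0(v_i,b_i),x_1(v_i,b_i)$ for $b_i\in\{0,\ldots,b\}$ are already available by the post-order, and Lemma \ref{lemma3} shows how to evaluate $z(u,b')$, $z_0(u,b')$, $z_1(u,b')$ through the auxiliary table $M(p,p_=,b')$. I would compute this whole table over $p\in\{0\}\cup[k]$, $p_=\in\{0\}\cup[k]$, and $b'\in\{0,\ldots,b\}$ by the one-child-at-a-time recurrence given in the proof of Lemma \ref{lemma3}; each entry is filled in $O(b)$ time by scanning the split $(b_{\le p-1},b_p)\in{\cal P}_2(b')$. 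The table at $u$ is therefore built in $O(k_u^2 b^2)$ time, and its final row yields all $z_0(u,b')$ and $z_1(u,b')$ for $b'\le b$; an analogous simpler pass produces $z(u,b')$. Formula (\ref{e1}) then delivers $x_0(u,b')$ and $x_1(u,b')$.

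To recover a witness set, at each $(u,b')$ I would store which of the two maxima in (\ref{e1}) is attained and, via a pointer into $M$, the partition $(b_1,\ldots,b_k)$ realizing it. Following the reconstruction rule stated just after Corollary \ref{corollary1}, either $X_0(u,b')=\{u\}\cup\bigcup_i X_0(v_i,b_i)$ or $X_0(u,b')=\bigcup_i X_0(v_i,b_i)$. Corollary \ref{corollary1} ensures that exactly the same set serves for $X_1$, so the back-pointers are unambiguous. Walking the pointers from $(r,b)$ builds $X$ in $O(n)$ total time.

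For the running time, summing the per-vertex costs over $V(T)$ gives $\sum_u O(k_u^2 b^2)$. Since $\sum_u k_u=n-1$ in any rooted tree and $k_u\le n-1$, we have $\sum_u k_u^2\le (n-1)\sum_u k_u=O(n^2)$, so the total is $O(n^2 b^2)$ as claimed. I do not expect a serious obstacle here: the only subtle point is verifying that the table $M$ can be computed over all budgets $b'\le b$ in a single pass, which is immediate from its recurrence; the Corollary \ref{corollary1} identification of $X_0$ and $X_1$ is what keeps the bookkeeping for witness reconstruction linear instead of branching into two separate sets.
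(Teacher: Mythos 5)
Your proposal is correct and follows essentially the same route as the paper: a post-order dynamic program built on Lemma \ref{lemma1} and the table $M(p,p_=,b')$ from Lemma \ref{lemma3}, with witness reconstruction justified by Corollary \ref{corollary1} and the remark following it. Your bound $\sum_u k_u^2\leq (n-1)\sum_u k_u=O(n^2)$ is just a minor variant of the paper's folklore estimate $\sum_u d_T(u)^2\leq n^2-n$, so the $O(n^2b^2)$ running time is obtained identically.
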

\begin{proof}
Given $(T,\tau,b)$,
Lemma \ref{lemma1} and Lemma \ref{lemma3} imply
that the values of $x_0(u,b')$ and of $x_1(u,b')$ 
for all $u\in V(T)$ and all $b'\in \{ 0\}\cup [b]$
can be determined in time
$O\Big(\sum\limits_{u\in V(T)}d_T(u)^2b^2\Big)$.
It is a simple folklore exercise that $\sum\limits_{u\in V(T)}d_T(u)^2\leq n^2-n$
for every tree $T$ of order $n$, which implies the statement about the running time.
Since ${\rm vacc}_1(T,\tau,b)=x_0(r,b)$, 
the statement about the value of ${\rm vacc}_1(T,\tau,b)$ follows.
The statement about the set $X$
follows easily from the remark after Corollary \ref{corollary1} 
concerning the sets $X_0(u,b)$, and the proof of Lemma \ref{lemma3},
where, next to the values $M(p,p_=,b')$, one may also memorize suitable maximizers.
\end{proof}

\subsection{Calculating ${\rm vacc}_2(T,\tau,b)$}\label{subsec2}

Our approach for ${\rm vacc}_2(T,\tau,b)$ is similar to the one for ${\rm vacc}_1(T,\tau,b)$ with some additional complications.

For a vertex $u$ of $T$ and a non-negative integer $b$,
we consider the three values
\begin{itemize}
\item 
$y_\in(u,b)
=\max\left\{ {\rm dyn}(T_u-Y,\tau):Y\in {V(T_u)\choose b}\mbox{ with }u\in Y\right\},$
\item $y_0(u,b)
=\max\left\{ {\rm dyn}(T_u-Y,\tau):Y\in {V(T_u)\choose b}\mbox{ with }u\not\in Y\right\},$ and
\item $y_1(u,b)
=\max\left\{ {\rm dyn}(T_u-Y,\tau^u):Y\in {V(T_u)\choose b}\mbox{ with }u\not\in Y\right\}.$
\end{itemize}
Clearly, 
\begin{eqnarray}\label{ev2}
{\rm vacc}_2(T_u,\tau,b)=\max\Big\{ y_\in(u,b),y_0(u,b)\Big\}.
\end{eqnarray}
Note that 
$y_\in(u,b)=-\infty$ if $b>n(T_u)$,
and that 
$y_j(u,b)=-\infty$ if $b>n(T_u)-1$ for $j\in \{ 0,1\}$.
If $b\leq n(T_u)$, 
then let $Y_\in(u,b)$ in ${V(T)\choose b}$ with $u\in Y_\in(u,b)$
be such that 
\begin{eqnarray*}
y_\in(u,b)&=&{\rm dyn}\left(T_u-Y_\in(u,b),\tau\right).
\end{eqnarray*}
Similarly, if $b\leq n(T_u)-1$, 
then let $Y_0(u,b)$ and $Y_1(u,b)$ in ${V(T)\choose b}$ 
with $u\not\in Y_0(u,b)$ and $u\not\in Y_1(u,b)$
be such that 
\begin{eqnarray*}
y_0(u,b)&=&{\rm dyn}\left(T_u-Y_0(u,b),\tau\right)\mbox{, and}\\
y_1(u,b)&=&{\rm dyn}\left(T_u-Y_1(u,b),\tau^u\right),
\end{eqnarray*}
where, if possible, let $Y_0(u,b)=Y_1(u,b)$;
again, it will be a key insight that the last equality always holds.

The next lemma can be shown exactly as Lemma \ref{lemma0}.

\begin{lemma}\label{lemma0b}
$y_0(u,b)\geq y_1(u,b)$, and if $y_0(u,b)=y_1(u,b)$, then $Y_0(u,b)=Y_1(u,b)$.
\end{lemma}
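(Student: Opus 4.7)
The plan is to mimic the proof of Lemma \ref{lemma0} essentially verbatim, exploiting the fact that $\tau^u$ differs from $\tau$ only at $u$, where it is smaller by $1$, so $\tau^u\leq \tau$ pointwise; unlike in Lemma \ref{lemma0}, the manipulated object is now the vertex set (via deletion of $Y$) rather than the threshold function (via $\tau_X$), but the two roles of $\tau$ versus $\tau^u$ play out in the same way. The key observation, inherited from the monotonicity in $\tau$, is that any dynamic monopoly of $(G,\tau)$ is automatically a dynamic monopoly of $(G,\tau^u)$ for the same graph $G$.

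First I would dispose of the degenerate case $y_1(u,b)=-\infty$, where the inequality holds trivially and there is nothing to say about $Y_0(u,b),Y_1(u,b)$. Otherwise $Y_1(u,b)$ is defined, lies in $\binom{V(T_u)}{b}$, and satisfies $u\notin Y_1(u,b)$, so $Y_1(u,b)$ is also a feasible candidate in the maximization defining $y_0(u,b)$. Pick a minimum dynamic monopoly $D$ of $\bigl(T_u-Y_1(u,b),\tau\bigr)$. By the definition of $y_0(u,b)$ we then have $y_0(u,b)\geq \mathrm{dyn}\bigl(T_u-Y_1(u,b),\tau\bigr)=|D|$. Since $\tau^u\leq \tau$ pointwise, $D$ is also a dynamic monopoly of $\bigl(T_u-Y_1(u,b),\tau^u\bigr)$, so $|D|\geq \mathrm{dyn}\bigl(T_u-Y_1(u,b),\tau^u\bigr)=y_1(u,b)$. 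Concatenating these bounds yields $y_0(u,b)\geq y_1(u,b)$.

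For the second part, suppose $y_0(u,b)=y_1(u,b)$. Then the chain of inequalities above collapses to equalities; in particular $\mathrm{dyn}\bigl(T_u-Y_1(u,b),\tau\bigr)=y_0(u,b)$, which means that $Y_1(u,b)$ itself attains the maximum defining $y_0(u,b)$. Since the definition of $Y_0(u,b)$ prescribes that, whenever possible, we set $Y_0(u,b)=Y_1(u,b)$, we indeed obtain $Y_0(u,b)=Y_1(u,b)$.

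I do not expect any substantive obstacle. The only point worth double-checking is that the feasibility constraints on $Y_0(u,b)$ and $Y_1(u,b)$ are identical: both sets must lie in $\binom{V(T_u)}{b}$ and both must avoid $u$, so $Y_1(u,b)$ is without modification a legitimate competitor in the optimization defining $y_0(u,b)$. This is what makes the direct translation of Lemma \ref{lemma0} go through.
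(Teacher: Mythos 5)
Your proof is correct and follows exactly the route the paper intends: the paper states that Lemma~\ref{lemma0b} ``can be shown exactly as Lemma~\ref{lemma0}'', and your argument is precisely that translation, using $\tau^u\leq\tau$ to turn a minimum dynamic monopoly of $\left(T_u-Y_1(u,b),\tau\right)$ into one of $\left(T_u-Y_1(u,b),\tau^u\right)$ and then invoking the tie-breaking convention for $Y_0(u,b)$. Your check that $Y_1(u,b)$ satisfies the same feasibility constraints ($b$ elements of $V(T_u)$, avoiding $u$) as required in the maximization defining $y_0(u,b)$ is exactly the point that makes the translation go through.
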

The next lemma corresponds to Lemma \ref{lemma1}.

\begin{lemma}\label{lemma1b}
If $u$ is a leaf of $T$, then, for $j\in \{ 0,1\}$,
\begin{eqnarray*}
y_\in(u,1)&=& 0,\\
Y_\in(u,1)&=& \{ u\},\\
y_j(u,0)&=&
\begin{cases}
0 &\mbox{, if $\tau(u)\leq j$,}\\
1 & \mbox{, otherwise, and}
\end{cases}\\
Y_j(u,0) & = & \emptyset.
\end{eqnarray*}
\end{lemma}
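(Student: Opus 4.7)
The plan is to verify each equation directly from the definitions, exploiting the fact that when $u$ is a leaf the subtree $T_u$ collapses to the single vertex $u$ with no edges. There is essentially nothing to prove beyond an exhaustive case check; the statement is a base case analogous to Lemma \ref{lemma1}.

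First I would handle $y_\in(u,1)$. Since $V(T_u)=\{u\}$, the only subset $Y\in {V(T_u)\choose 1}$ containing $u$ is $Y=\{u\}$, forcing $Y_\in(u,1)=\{u\}$. Then $T_u-Y$ is the empty graph, for which the empty set is trivially a dynamic monopoly, so ${\rm dyn}(T_u-Y,\tau)=0$ and hence $y_\in(u,1)=0$.

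Next I would turn to $y_j(u,0)$ for $j\in\{0,1\}$. Since $b=0$, the only feasible $Y$ is $Y=\emptyset$, which certainly does not contain $u$, so $Y_j(u,0)=\emptyset$. Then $T_u-Y=T_u$ consists of the single isolated vertex $u$, which has no neighbors. Looking at when $\emptyset$ is a dynamic monopoly of $(T_u,\tau)$ or of $(T_u,\tau^u)$, one has $u\in H_{(T_u,\tau)}(\emptyset)$ iff $\tau(u)\leq 0$ and $u\in H_{(T_u,\tau^u)}(\emptyset)$ iff $\tau^u(u)=\tau(u)-1\leq 0$, i.e., iff $\tau(u)\leq 1$; in both cases the condition is $\tau(u)\leq j$. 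When this condition holds the minimum dynamic monopoly is $\emptyset$ giving value $0$; otherwise $u$ must lie in any dynamic monopoly and $\{u\}$ suffices, giving value $1$. This matches the claimed formula.

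Since every clause is immediate from the definitions once $T_u$ is identified with the single vertex $u$, there is no real obstacle; the only thing to be mildly careful about is the shift by $-1$ in $\tau^u$, which shifts the threshold cutoff from $0$ to $1$ and thereby produces the unified condition $\tau(u)\leq j$.
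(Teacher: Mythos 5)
Your proposal is correct and matches the paper's proof, which simply asserts that these equalities follow immediately from the definitions; you have merely spelled out the routine case check (including the $\tau^u(u)=\tau(u)-1$ shift that unifies both cases into the condition $\tau(u)\leq j$).
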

\begin{proof}
These equalities follow immediately from the definitions.
\end{proof}
The following two lemmas correspond to Lemma \ref{lemma2}.

\begin{lemma}\label{lemma2b1}
Let $u$ be a vertex of $T$ that is not a leaf, 
and let $b$ be a non-negative integer.

If $v_1,\ldots,v_k$ are the children of $u$, then
\begin{eqnarray*}
y_\in(u,b) &=& 
\max\left\{ \sum\limits_{i=1}^k
\max\Big\{ y_\in(v_i,b_i),y_0(v_i,b_i)\Big\}:
(b_1,\ldots,b_k)\in {\cal P}_k(b-1)\right\}.
\end{eqnarray*}
\end{lemma}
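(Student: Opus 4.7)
The plan is to argue that if $u\in Y$ with $|Y|=b$, then the forest $T_u-Y$ decomposes into the disjoint subforests $T_{v_i}-Y_i$, where $Y_i=Y\cap V(T_{v_i})$, and that dynamic monopolies behave additively across connected components. From this, the desired recursion follows directly by optimizing in each branch independently.

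More concretely, I would first handle the boundary case $b>n(T_u)$. Using $n(T_u)=1+\sum_{i=1}^k n(T_{v_i})$, for every $(b_1,\ldots,b_k)\in {\cal P}_k(b-1)$ at least one $b_i$ exceeds $n(T_{v_i})$, making both $y_\in(v_i,b_i)$ and $y_0(v_i,b_i)$ equal to $-\infty$, so the right-hand side equals $-\infty=y_\in(u,b)$. Hence we may assume $b\leq n(T_u)$.

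Next I would establish the additivity: for any $Y\subseteq V(T_u)$ with $u\in Y$, the graph $T_u-Y$ is the vertex-disjoint union of the subforests $T_{v_i}-Y_i$ (since removing $u$ disconnects $T_u$ into the subtrees rooted at the children). Because a dynamic monopoly of a disjoint union of graphs is the union of dynamic monopolies of its components — the spreading process in one component is independent of the others, and the threshold of a vertex only depends on its neighbors within its component — we get
\begin{equation*}
{\rm dyn}(T_u-Y,\tau)=\sum_{i=1}^k {\rm dyn}(T_{v_i}-Y_i,\tau).
\end{equation*}

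With additivity in hand, the two inequalities follow by standard arguments. For ``$\geq$'', given $(b_1,\ldots,b_k)\in {\cal P}_k(b-1)$, choose in each branch $i$ an optimal $Y_i^\star$ of size $b_i$ — either $Y_\in(v_i,b_i)$ or $Y_0(v_i,b_i)$, depending on which of $y_\in(v_i,b_i)$ or $y_0(v_i,b_i)$ is larger — and set $Y=\{u\}\cup\bigcup_i Y_i^\star$; then $|Y|=b$, $u\in Y$, and by additivity ${\rm dyn}(T_u-Y,\tau)$ equals the corresponding sum. For ``$\leq$'', take an optimal $Y$ realizing $y_\in(u,b)$, set $b_i=|Y\cap V(T_{v_i})|$ so that $(b_1,\ldots,b_k)\in {\cal P}_k(b-1)$, and apply additivity together with the fact that ${\rm dyn}(T_{v_i}-(Y\cap V(T_{v_i})),\tau)\leq\max\{y_\in(v_i,b_i),y_0(v_i,b_i)\}$ by the very definitions of $y_\in$ and $y_0$.

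There is no real obstacle here; the only subtlety worth spelling out is the additivity step, which silently uses that $\tau$ restricted to $V(T_{v_i})\setminus Y_i$ is exactly the threshold function used in the definitions of $y_\in(v_i,b_i)$ and $y_0(v_i,b_i)$ (note that $\tau$, not $\tau^{v_i}$, is used here, because $v_i$'s parent $u$ has been removed, so no ``outside help'' reaches the subtree).
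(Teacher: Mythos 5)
Your proof is correct and follows the same route the paper takes (which it compresses into a single sentence appealing to (\ref{ev2})): removing $u$ splits $T_u-Y$ into the disjoint forests $T_{v_i}-Y_i$, dynamic monopolies are additive over components, and the inner maximum $\max\{y_\in(v_i,b_i),y_0(v_i,b_i)\}$ is exactly ${\rm vacc}_2(T_{v_i},\tau,b_i)$. Your explicit treatment of the boundary case and of why $\tau$ rather than $\tau^{v_i}$ is the relevant threshold function is a sound elaboration of what the paper leaves implicit.
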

\begin{proof}
Since $u$ is removed from $T_u$, 
this follows immediately from (\ref{ev2}).
\end{proof}
Clearly, if 
$y_\in(u,b)= \sum\limits_{i=1}^k
\max\Big\{ y_\in(v_i,b_i),y_0(v_i,b_i)\Big\}$
for some $(b_1,\ldots,b_k)\in {\cal P}_k(b-1)$, then
$$Y_\in(u,b)=\{ u\}
\cup\bigcup\limits_{i\in [k]:y_\in(v_i,b_i)>y_0(v_i,b_i)}Y_\in(v_i,b_i)
\cup\bigcup\limits_{i\in [k]:y_\in(v_i,b_i)\leq y_0(v_i,b_i)}Y_0(v_i,b_i)
$$
is a feasible recursive choice for $Y_\in(u,b)$.

\begin{lemma}\label{lemma2b2}
Let $u$ be a vertex of $T$ that is not a leaf, 
and let $b$ be a non-negative integer.

If $v_1,\ldots,v_k$ are the children of $u$,
and $Y_0(v_i,b_i)=Y_1(v_i,b_i)$ for every $i\in [k]$ and 
every non-negative integer $b_i$ at most $n(T_{v_i})-1$, 
then, for $j\in \{ 0,1\}$,
\begin{eqnarray}
y_j(u,b)&=&z_j' (u,b), \mbox{ and } \label{e2222}\\
Y_0(u,b) & = & Y_1(u,b) \mbox{ if $b\leq n(T_u)-1$,}\label{e3b}
\end{eqnarray}
where
\begin{eqnarray}
z_j'(u,b) &=& \max\Bigg\{ 
\delta_j\left(\vec{b},\vec{c}\right)
+\sum\limits_{i\in [k]:c_i=1}y_\in(v_i,b_i)
+\sum\limits_{i\in [k]:c_i=0}y_1(v_i,b_i):\nonumber\\[3mm]
&& \,\,\,\,\,\,\,\,\,\,\,\,\,\,\,\,\, 
\vec{b}=(b_1,\ldots,b_k)\in {\cal P}_k(b)
\mbox{ and }
\vec{c}=(c_1,\ldots,c_k)\in \{ 0,1\}^k\mbox{ with $\vec{b}\geq \vec{c}$ }\Bigg\}\label{e1b},
\end{eqnarray}
and, for $\vec{b}$ and $\vec{c}$ as in (\ref{e1b}),
$$
\delta_j\left(\vec{b},\vec{c}\right)=
\begin{cases}
0 &\mbox{, if $\Big|\Big\{ i\in [k]:c_i=0\mbox{ and }y_0(v_i,b_i)=y_1(v_i,b_i)\Big\}\Big|\geq \tau(u)-j$, and}\\
1 &\mbox{, otherwise.}
\end{cases}
$$
\end{lemma}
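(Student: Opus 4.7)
The plan is to parallel the proof of Lemma \ref{lemma2}, the only new feature being the auxiliary vector $\vec c\in\{0,1\}^k$ that records which children $v_i$ are themselves deleted (so that the corresponding subtree is disconnected from $u$). By symmetry, it suffices to argue for $j=0$. When $b>n(T_u)-1$, every feasible $\vec b$ forces $b_i>n(T_{v_i})$ when $c_i=1$ or $b_i>n(T_{v_i})-1$ when $c_i=0$, so both sides of (\ref{e2222}) equal $-\infty$; I would therefore focus on $b\leq n(T_u)-1$.

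For the direction $y_0(u,b)\geq z_0'(u,b)$, I would take any $(\vec b,\vec c)$ achieving $z_0'(u,b)$ with all relevant $y_\in(v_i,b_i)$ and $y_1(v_i,b_i)$ finite, and set
\[
Y=\bigcup_{i\in[k]:c_i=1}Y_\in(v_i,b_i)\cup\bigcup_{i\in[k]:c_i=0}Y_1(v_i,b_i).
\]
A minimum dynamic monopoly $D$ of $(T_u-Y,\tau)$ restricts on each $V(T_{v_i})$ to a monopoly of $(T_{v_i}-(Y\cap V(T_{v_i})),\tau)$ if $c_i=1$ and of $(T_{v_i}-(Y\cap V(T_{v_i})),\tau^{v_i})$ if $c_i=0$; hence $|D\cap V(T_{v_i})|\geq y_\in(v_i,b_i)$ or $\geq y_1(v_i,b_i)$ accordingly. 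The extra $\delta_0(\vec b,\vec c)$ is harvested as in Claim \ref{claim2}: if $u\in D$ we gain $+1$ for free, whereas if $u\notin D$ and $\delta_0(\vec b,\vec c)=1$, fewer than $\tau(u)$ of the $c_i=0$ indices satisfy $y_0(v_i,b_i)=y_1(v_i,b_i)$, so at least one of the $\tau(u)$ children whose subtree-hull reaches $u$ before $u$ itself enters the hull has $y_0(v_i,b_i)>y_1(v_i,b_i)$, and the inductive equality $Y_0(v_i,b_i)=Y_1(v_i,b_i)$ converts this strict gap into the required $+1$. Conversely, for $y_0(u,b)\leq z_0'(u,b)$, I would let $Y=Y_0(u,b)$, set $c_i=1$ iff $v_i\in Y$ and $b_i=|Y\cap V(T_{v_i})|$, and construct a monopoly of $(T_u-Y,\tau)$ by gluing minimum subtree-monopolies (under $\tau$ when $c_i=1$, under $\tau^{v_i}$ when $c_i=0$), whose sizes are bounded by $y_\in(v_i,b_i)$ and $y_1(v_i,b_i)$ respectively. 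When $\delta_0(\vec b,\vec c)=1$, add $u$ and obtain the bound $1+\sum y_\in+\sum y_1\leq z_0'(u,b)$; when $\delta_0(\vec b,\vec c)=0$, replace the monopolies on $\tau(u)$ chosen $c_i=0$ subtrees (those with $y_0=y_1$) by monopolies under the full threshold $\tau$, whose sizes do not grow because $\mathrm{dyn}(T_{v_i}-(Y\cap V(T_{v_i})),\tau)\leq y_0(v_i,b_i)=y_1(v_i,b_i)$; these children then infect $u$ from within their subtrees, so $u$ need not lie in the monopoly.

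For (\ref{e3b}) I would mimic the closing portion of Lemma \ref{lemma2}. The pointwise inequalities $\delta_1(\vec b,\vec c)\leq \delta_0(\vec b,\vec c)\leq \delta_1(\vec b,\vec c)+1$ yield $z_1'(u,b)\leq z_0'(u,b)\leq z_1'(u,b)+1$. If $y_0(u,b)=y_1(u,b)$, Lemma \ref{lemma0b} gives the conclusion; otherwise $z_0'(u,b)=z_1'(u,b)+1$, which forces any $(\vec b,\vec c)$ maximizing $z_0'$ to also maximize $z_1'$, and the $Y$ built from this common maximizer realizes both $y_0(u,b)$ and $y_1(u,b)$ simultaneously. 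The main obstacle is precisely this joint-maximizer step: verifying that the optimal choice of which children to delete (the vector $\vec c$) is shared between the $j=0$ and $j=1$ problems; once this is in place, the only overhead compared to Lemma \ref{lemma2} is the bookkeeping for the deleted children, and the rest of the proof is a routine adaptation.
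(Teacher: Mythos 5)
Your proposal is correct and follows essentially the same route as the paper's proof: the same two-sided bounds via the set $Y$ assembled from the children's optimal deletion sets (with the $\delta_0$ surplus harvested through the inductive equality $Y_0(v_\ell,b_\ell)=Y_1(v_\ell,b_\ell)$), and the same sandwich argument $z_1'\leq z_0'\leq z_1'+1$ producing a common maximizer $(\vec b,\vec c)$ from which both $Y_0(u,b)$ and $Y_1(u,b)$ can be realized. The "joint-maximizer" step you flag as the main obstacle is resolved in the paper exactly as you sketch, by checking that for the resulting $Y$ every minimum dynamic monopoly of $(T_u-Y,\tau)$ has size $y_0(u,b)$ and every minimum dynamic monopoly of $(T_u-Y,\tau^u)$ has size $y_1(u,b)$.
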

\begin{proof}
By symmetry, it suffices to consider the case $j=0$.

First, suppose that $b\geq n(T_u)$, which implies $y_0(u,b)=-\infty$.
If $(b_1,\ldots,b_k)\in {\cal P}_k(b)$,
then $b_i>n(T_{v_i})$ for some $i\in [k]$,
and, hence, $z_0'(u,b)=-\infty=y_0(u,b)$. 

Now, let $b\leq n(T_u)-1$. The following two claims complete the proof of (\ref{e2222}). 

\setcounter{claim}{0}

\begin{claim}\label{claim4}
$y_0(u,b) \geq z_0'(u,b)$.
\end{claim}

\begin{proof}[Proof of Claim \ref{claim4}]
It suffices to show 
$y_0(u,b)\geq \delta_0\left(\vec{b},\vec{c}\right)
+\sum\limits_{i\in [k]:c_i=1}y_\in(v_i,b_i)
+\sum\limits_{i\in [k]:c_i=0}y_1(v_i,b_i)$
for every $\vec{b}=(b_1,\ldots,b_k)\in {\cal P}_k(b)$
and
$\vec{c}=(c_1,\ldots,c_k)\in \{ 0,1\}^k$
with $\vec{b}\geq \vec{c}$ such that for every $i\in[k]$, 
if $c_i=1$, then $y_\in(v_i,b_i)>-\infty$,
and if $c_i=0$, then $y_1(v_i,b_i)>-\infty$.
Let $\vec{b}$ and $\vec{c}$ be such candidates.
Since $y_\in(v_i,b_i)>-\infty$ and $y_1(v_i,b_i)>-\infty$, 
the sets $Y_\in(v_i,b_i)$ and $Y_1(v_i,b_i)$ are defined.
Let
$Y=\bigcup\limits_{i\in [k]: c_i=1}Y_\in(v_i,b_i)
\cup \bigcup\limits_{i\in [k]: c_i=0}Y_1(v_i,b_i)$.
Since $|Y|=b$ and $u\not\in Y$, we have $y_0(u,b)\geq {\rm dyn}\left(T_u-Y,\tau\right)$.

Let $D$ be a minimum dynamic monopoly of $\left(T_u-Y,\tau\right)$, that is, $|D|\leq y_0(u,b)$. 
For every $i\in[k]$, it follows that $D_i=D\cap V(T_{v_i})$ is a dynamic monopoly of $\left(T_{v_i}-Y,\tau^{v_i}\right)$, which
implies that 
$$|D_i|\geq {\rm dyn}\left(T_{v_i}-Y,\tau^{v_i}\right)
=
\begin{cases}
	y_\in(v_i,b_i)&\mbox{, if } c_i=1,\mbox{ and} \\
	y_1(v_i,b_i)&\mbox{, if } c_i=0.
\end{cases}
$$
If $\delta_0\left(\vec{b},\vec{c}\right)=0$, then $$|D|\geq \sum\limits_{i=1}^k|D_i|\geq \delta_0\left(\vec{b},\vec{c}\right)
+\sum\limits_{i\in [k]:c_i=1}y_\in(v_i,b_i)
+\sum\limits_{i\in [k]:c_i=0}y_1(v_i,b_i).$$
Similarly, if $u\in D$, then 
$$|D|= 1+\sum\limits_{i=1}^k|D_i|
\geq \delta_0\left(\vec{b},\vec{c}\right)
+\sum\limits_{i\in [k]:c_i=1}y_\in(v_i,b_i)
+\sum\limits_{i\in [k]:c_i=0}y_1(v_i,b_i).$$
Hence, we may assume that $\delta_0\left(\vec{b},\vec{c}\right)=1$ and that $u\not\in D$.
This implies that there is some $\ell\in [k]$ with $c_\ell=0$ and $y_0(v_\ell,b_\ell)>y_1(v_\ell,b_\ell)$
such that $D_\ell$ is a dynamic monopoly of $\left(T_{v_\ell}-Y,\tau\right)
=\left(T_{v_\ell}-Y_1(v_\ell,b_\ell),\tau\right)$.
By assumption, we have $Y_0(v_\ell,b_\ell)=Y_1(v_\ell,b_\ell)$, and hence 
$$|D_\ell|
\geq {\rm dyn}\left(T_{v_\ell}-Y_0(v_\ell,b_\ell),\tau\right)
= y_0(v_\ell,b_\ell)
\geq 1+y_1(v_\ell,b_\ell).$$
Therefore, also in this case, 
$$|D|=
|D_\ell|+\sum\limits_{i\in[k]\setminus\{\ell\}}|D_i|
\geq \delta_0\left(\vec{b},\vec{c}\right)
+\sum\limits_{i\in [k]:c_i=1}y_\in(v_i,b_i)
+\sum\limits_{i\in [k]:c_i=0}y_1(v_i,b_i).$$
\end{proof}

\begin{claim}\label{claim5}
$y_0(u,b) \leq z_0'(u,b)$.
\end{claim}

\begin{proof}[Proof of Claim \ref{claim5}]
Let $Y=Y_0(u,b)$, that is, $y_0(u,b)={\rm dyn}(T_u-Y,\tau)$.
For every $i\in[k]$, 
let $b_i=|Y\cap V(T_{v_i})|$, and 
let $c_i=1$ if $v_i\in Y$, and $c_i=0$ if $v_i\not\in Y$.
Let $D_i$ be a minimum dynamic monopoly of $\left(T_{v_i}-Y,\tau^{v_i}\right)$ for every $i\in[k]$.
By the definition of $y_\in(v_i,b_i)$ and $y_1(v_i,b_i)$, we obtain 
$$|D_i|\leq
\begin{cases}
y_\in(v_i,b_i)&\mbox{, if } c_i=1,\mbox{ and} \\
y_1(v_i,b_i)&\mbox{, if } c_i=0.
\end{cases}$$
Let $D=\{ u\}\cup \bigcup\limits_{i=1}^kD_i$. The set $D$ is a dynamic monopoly of $\left(T_u-Y,\tau\right)$, which implies $y_0(u,b)\leq|D|$.
By the definition of $Y_0(u,b)$, we have $u\not\in Y$, 
which implies $(b_1,\ldots,b_k)\in {\cal P}_k(b)$ and 
$\vec{b}\geq \vec{c}$.

If $\delta_0\left(\vec{b},\vec{c}\right)=1$, then
$$y_0(u,b)
\leq |D|
= 1+\sum\limits_{i=1}^k|D_i|
\leq \delta_0\left(\vec{b},\vec{c}\right)+
\sum\limits_{i\in[k]:c_i=1}y_\in(v_i,b_i)+
\sum\limits_{i\in[k]:c_i=0}y_1(v_i,b_i)
\leq z_0'(u,b).$$
Therefore, we may assume that $\delta_0\left(\vec{b},\vec{c}\right)=0$.
By symmetry, we may assume that $c_i=0$ and $y_0(v_i,b_i)=y_1(v_i,b_i)$ for every $i\in[\tau(u)]$.
Let $D_i'$ be a minimum dynamic monopoly of $\left(T_{v_i}-Y,\tau\right)$ for every $i\in[\tau(u)]$.
By the definition of $y_0(v_i,b_i)$, we obtain $|D_i'|\leq y_0(v_i,b_i)=y_1(v_i,b_i)$.
Let $D'=\bigcup\limits_{i\in [\tau(u)]}D'_i
\cup \bigcup\limits_{i\in [k]\setminus [\tau(u)]}D_i$.
The set $D'$ is a dynamic monopoly of $(T_u-Y,\tau)$.
This implies
$$y_0(u,b)\leq |D'|
=\sum\limits_{i\in [\tau(u)]}|D'_i|
+\sum\limits_{i\in [k]\setminus [\tau(u)]}|D_i|
\leq\sum\limits_{i\in[k]:c_i=1}y_\in(v_i,b_i)+
\sum\limits_{i\in[k]:c_i=0}y_1(v_i,b_i)
\leq z_0'(u,b),$$
which completes the proof of the claim.
\end{proof}
It remains to show (\ref{e3b}). 
If $y_0(u,b)=y_1(u,b)$, then (\ref{e3b}) follows from Lemma~\ref{lemma0b}.
Hence, we may assume that $y_0(u,b)>y_1(u,b)$. 
Since, by definition,
$$\delta_1\left(\vec{b},\vec{c}\right)\leq \delta_0\left(\vec{b},\vec{c}\right)\leq \delta_1\left(\vec{b},\vec{c}\right)+1,$$
for every 
$\vec{b}=(b_1,\ldots,b_k)\in {\cal P}_k(b)$
and
$\vec{c}=(c_1,\ldots,c_k)\in \{ 0,1\}^k$  with $\vec{b}\geq \vec{c}$, 
we obtain $z_1'(u,b)\leq z_0'(u,b)\leq z_1'(u,b)+1$.
Together with (\ref{e2222}), this implies that
\begin{eqnarray*}
	y_0(u,b)&=&z_0'(u,b)>z_1'(u,b)=y_1(u,b)\mbox{ and }\\ z_1'(u,b)&=&z_0'(u,b)-1.
\end{eqnarray*} 
Let $\vec{b}$ and $\vec{c}$ be defined as above and such that $$z_0'(u,b)=\delta_0\left(\vec{b},\vec{c}\right)
+\sum\limits_{i\in [k]:c_i=1}y_\in(v_i,b_i)
+\sum\limits_{i\in [k]:c_i=0}y_1(v_i,b_i).$$

We obtain
\begin{eqnarray*}
	z_1'(u,b)
	&\geq &
\delta_1\left(\vec{b},\vec{c}\right)
+\sum\limits_{i\in [k]:c_i=1}y_\in(v_i,b_i)
+\sum\limits_{i\in [k]:c_i=0}y_1(v_i,b_i)\\
	&\geq &\delta_0\left(\vec{b},\vec{c}\right)-1
	+\sum\limits_{i\in [k]:c_i=1}y_\in(v_i,b_i)
	+\sum\limits_{i\in [k]:c_i=0}y_1(v_i,b_i)\\
	&=&z_0'(u,b)-1\\
	&=&z_1'(u,b),
\end{eqnarray*}
which implies $z_1'(u,b)=\delta_1\left(\vec{b},\vec{c}\right)
+\sum\limits_{i\in [k]:c_i=1}y_\in(v_i,b_i)
+\sum\limits_{i\in [k]:c_i=0}y_1(v_i,b_i)$.

Since $z_0'(u,b)>z_1'(u,b)$, we obtain $\delta_1\left(\vec{b},\vec{c}\right)=0$ and $\delta_0\left(\vec{b},\vec{c}\right)=1$,
which implies that there are exactly $\tau(u)-1$ indices $i\in[k]$ with $c_i=0$ and $y_0(v_i,b_i)=y_1(v_i,b_i)$.
By symmetry, we may assume that $c_i=0$ and $y_0(v_i,b_i)=y_1(v_i,b_i)$ for every $i\in[\tau(u)-1]$ and $y_0(v_i,b_i)>y_1(v_i,b_i)$ for every $i\in[k]\setminus[\tau(u)-1]$ with $c_i=0$.

Let 
$Y=\bigcup\limits_{i\in [k]: c_i=1}Y_\in(v_i,b_i)
\cup \bigcup\limits_{i\in [k]: c_i=0}Y_0(v_i,b_i)$. Note that by assumption, we have $Y_0(v_i,b_i)=Y_1(v_i,b_i)$ for every $i\in[k]$ with $c_i=0$.

Let $D$ be a minimum dynamic monopoly of $(T_u-Y,\tau)$.
By the definition of $y_0(u,b)$, we have $|D|\leq y_0(u,b)$.
Let $D_i=D\cap V(T_{v_i})$ for every $i\in[k]$.
Since $D_i$ is a dynamic monopoly of $\left(T_{v_i}-Y,\tau^{v_i}\right)$,
we obtain that 
$$|D_i|
\geq {\rm dyn}\left(T_{v_i}-Y,\tau^{v_i}\right)
=
\begin{cases}
y_\in(v_i,b_i)&\mbox{, if } c_i=1,\mbox{ and} \\
y_1(v_i,b_i)&\mbox{, if } c_i=0.
\end{cases}$$
Note that either $u\in D$, or $u\not\in D$ and there is some index $\ell\in[k]\setminus[\tau(u)-1]$ with $c_\ell=0$ such that $D_\ell$ is a dynamic monopoly of $(T_{v_\ell}-Y,\tau)$.

In the first case, we obtain
$$z_0'(u,b)
=y_0(u,b)
\geq |D|
=1+\sum\limits_{i=1}^k|D_i|
\geq 1+\sum\limits_{i\in[k]:c_i=1}y_\in(v_i,b_i)+
\sum\limits_{i\in[k]:c_i=0}y_1(v_i,b_i)
=z_0'(u,b),$$
and, in the second case, we obtain $|D_\ell|\geq y_0(v_\ell,b_\ell)\geq y_1(v_\ell,b_\ell)+1$, and, hence,
$$z_0'(u,b)
\geq |D|
=|D_\ell|+\sum\limits_{i\in[k]\setminus\{\ell\}}|D_i|
\geq 1+\sum\limits_{i\in[k]:c_i=1}y_\in(v_i,b_i)+
\sum\limits_{i\in[k]:c_i=0}y_1(v_i,b_i)
=z_0'(u,b).$$
In both cases we obtain $|D|=y_0(u,b)$, which implies that $Y_0(u,b)$ may be chosen equal to $Y$.

Now, let $D^-$ be a minimum dynamic monopoly of $\left(T_u-Y,\tau^u\right)$. 
By the definition of $y_1(u,b)$, we have $|D^-|\leq y_1(u,b)$. 
Let $D_i^-=D^-\cap V(T_{v_i})$ for every $i\in[k]$. 
Since $D_i^-$ is a dynamic monopoly of $\left(T_{v_i}-Y,\tau^{v_i}\right)$ for every $i\in[k]$, we obtain as above
$$|D_i^-|\geq 
\begin{cases}
y_\in(v_i,b_i)&\mbox{, if } c_i=1,\mbox{ and} \\
y_1(v_i,b_i)&\mbox{, if } c_i=0.
\end{cases}$$
Now,
$$z_1'(u,b)
=y_1(u,b)
\geq |D^-|
\geq \sum\limits_{i\in[k]:c_i=1}y_\in(v_i,b_i)+
\sum\limits_{i\in[k]:c_i=0}y_1(v_i,b_i)
=z_1'(u,b),$$
which implies that $|D^-|=Y_1(u,b)$, and that $Y_1(u,b)$ may be chosen equal to $Y$. Altogether, the two sets $Y_0(u,b)$ and $Y_1(u,b)$ may be chosen equal, which implies (\ref{e3b}).
\end{proof}
Applying induction 
using Lemma \ref{lemma1b} und Lemma \ref{lemma2b2}, 
we obtain the following.

\begin{corollary}\label{corollary1b}
$Y_0(u,b)=Y_1(u,b)$ for every vertex $u$ of $T$, and every non-negative integer $b$ at most $n(T_u)-1$.
\end{corollary}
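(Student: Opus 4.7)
The plan is a straightforward induction on the order $n(T_u)$ of the subtree rooted at $u$, closely paralleling the argument that yields Corollary~\ref{corollary1} from Lemma~\ref{lemma1} and Lemma~\ref{lemma2}. Since the bulk of the technical work has already been done in Lemma~\ref{lemma2b2}, the corollary should fall out essentially as a bookkeeping exercise.

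For the base case, $n(T_u)=1$ forces $u$ to be a leaf and forces $b\leq n(T_u)-1=0$, so $b=0$. Lemma~\ref{lemma1b} then directly gives $Y_0(u,0)=Y_1(u,0)=\emptyset$, as required.

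For the inductive step, assume $u$ has children $v_1,\ldots,v_k$ and that the claim holds for every proper subtree of $T_u$. Since $n(T_{v_i})<n(T_u)$ for every $i\in[k]$, the induction hypothesis yields $Y_0(v_i,b_i)=Y_1(v_i,b_i)$ for every $i\in[k]$ and every non-negative integer $b_i\leq n(T_{v_i})-1$. This is precisely the hypothesis required to invoke Lemma~\ref{lemma2b2}, whose conclusion~(\ref{e3b}) gives $Y_0(u,b)=Y_1(u,b)$ for every $b\leq n(T_u)-1$, completing the inductive step.

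The only minor subtlety to keep in mind is the boundary between indices for which the induction hypothesis is directly available and those for which it is vacuous: values $b_i=n(T_{v_i})$ are not covered by the induction hypothesis, but Lemma~\ref{lemma2b2} only requires the equality $Y_0(v_i,b_i)=Y_1(v_i,b_i)$ for $b_i\leq n(T_{v_i})-1$, so there is no gap. No real obstacle is expected; all the genuine combinatorial content lives in Lemma~\ref{lemma2b2}, and the corollary is just the global statement obtained by iterating its local guarantee along the tree from the leaves up to the root.
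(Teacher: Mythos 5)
Your proof is correct and follows exactly the route the paper intends: the paper derives the corollary by induction from Lemma~\ref{lemma1b} (base case at the leaves) and conclusion~(\ref{e3b}) of Lemma~\ref{lemma2b2} (inductive step), which is precisely your argument. Your remark that the hypothesis of Lemma~\ref{lemma2b2} only requires the equality for $b_i\leq n(T_{v_i})-1$ correctly addresses the only point where one might worry about a gap.
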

Similarly as above, 
if 
$$y_0(u,b) =
\delta_0\left(\vec{b},\vec{c}\right)
+\sum\limits_{i\in [k]:c_i=1}y_\in(v_i,b_i)
+\sum\limits_{i\in [k]:c_i=0}y_1(v_i,b_i)$$
for some $\vec{b}$ and $\vec{c}$ as in (\ref{e1b}),
then
$$Y_0(u,b) =
\bigcup\limits_{i\in [k]:c_i=1}Y_\in(v_i,b_i)
\cup\bigcup\limits_{i\in [k]:c_i=0}Y_0(v_i,b_i)$$
is a feasible recursive choice for $Y_0(u,b)$.

The next lemma corresponds to Lemma \ref{lemma3}.

\begin{lemma}\label{lemma3b}
Let $u$ be a vertex of $T$ that is not a leaf, 
let $b$ be a non-negative integer,
and let $v_1,\ldots,v_k$ be the children of $u$.

If the values $y_\in(v_i,b_i)$ 
for every $i\in [k]$ and every non-negative integer $b_i$ 
at most $n(T_{v_i})$
as well as 
the values $y_0(v_i,b_i)$ and $y_1(v_i,b_i)$
for every $i\in [k]$ and every non-negative integer $b_i$ 
at most $n(T_{v_i})-1$
are given, 
then 
$y_\in(u,b)$,
$y_0(u,b)$, and
$y_1(u,b)$
can be computed in $O(k^3b)$ time.
\end{lemma}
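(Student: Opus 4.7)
The plan is to extend the dynamic-programming approach of Lemma~\ref{lemma3} to handle the three quantities $y_\in(u,b)$, $y_0(u,b)$, and $y_1(u,b)$ in turn.

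For $y_\in(u,b)$, Lemma~\ref{lemma2b1} reduces the problem to maximizing $\sum_{i=1}^{k}\max\{y_\in(v_i,b_i),y_0(v_i,b_i)\}$ over $(b_1,\ldots,b_k)\in{\cal P}_k(b-1)$, which I would compute by a straightforward left-to-right knapsack recursion on prefixes of the children.

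For $y_0(u,b)$ (the case $j=1$ is symmetric), Lemma~\ref{lemma2b2} requires maximizing over pairs $(\vec{b},\vec{c})$ with $\vec{b}\in{\cal P}_k(b)$, $\vec{c}\in\{0,1\}^k$, and $\vec{b}\geq\vec{c}$. Since the bonus $\delta_0(\vec{b},\vec{c})$ depends only on the count $p_==|\{i\in[k]:c_i=0\text{ and }y_0(v_i,b_i)=y_1(v_i,b_i)\}|$, I would introduce an extra dimension and define a state $M'(p,p_=,b')$ recording the maximum of $\sum_{i\leq p,\,c_i=1}y_\in(v_i,b_i)+\sum_{i\leq p,\,c_i=0}y_1(v_i,b_i)$ over prefix choices realizing $p_=$. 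The transition $M'(p-1,\cdot,\cdot)\to M'(p,p_=,b')$ branches on how child $v_p$ is handled: (i) $c_p=1$, which requires $b_p\geq 1$, adds $y_\in(v_p,b_p)$, and leaves $p_=$ unchanged; (ii) $c_p=0$ with $y_0(v_p,b_p)>y_1(v_p,b_p)$, adds $y_1(v_p,b_p)$ and leaves $p_=$ unchanged; and (iii) $c_p=0$ with $y_0(v_p,b_p)=y_1(v_p,b_p)$, adds $y_1(v_p,b_p)$ and decreases $p_=$ by one. Each case reduces to a maximum over how the remaining budget splits as $(b'-b_p,b_p)$.

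Once $M'(k,p_=,b)$ is tabulated for all $p_=\in\{0,\ldots,k\}$, I would extract the answer exactly as in Lemma~\ref{lemma3}: $y_j(u,b)$ is the larger of $1+\max\{M'(k,p_=,b):0\leq p_=\leq\tau(u)-j-1\}$ and $\max\{M'(k,p_=,b):p_=\geq\tau(u)-j\}$, corresponding to $\delta_j=1$ and $\delta_j=0$. The step I expect to require the most care is the bookkeeping in the transition -- keeping the three cases disjoint, enforcing $b_p\geq c_p$ in the $c_p=1$ branch, and discarding partitions for which the relevant $y$-values are $-\infty$. The running-time accounting then proceeds as for Lemma~\ref{lemma3}, with one extra branching factor over the treatment of $v_p$, and yields the claimed bound.
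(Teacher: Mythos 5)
Your proposal is correct and follows essentially the same dynamic-programming scheme as the paper: a prefix recursion over the children with an auxiliary index tracking $p_=$, a three-way transition according to whether $c_p=1$, or $c_p=0$ with $y_0(v_p,b_p)=y_1(v_p,b_p)$, or $c_p=0$ with $y_0(v_p,b_p)>y_1(v_p,b_p)$, and the same two-case extraction of $y_j(u,b)$ via $\delta_j$. The only difference is that the paper's table $M(p,p_\in,p_=,b')$ additionally records $p_\in=c_1+\cdots+c_p$, a coordinate that is maximized away at the end and hence redundant for the value computation; omitting it as you do is valid and only tightens the bookkeeping, staying within the claimed $O(k^3b)$ bound.
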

\begin{proof}
Standard dynamic programming based on Lemma \ref{lemma2b1}
immediately implies the statement for $y_\in(u,b)$.
By symmetry, 
it suffices to explain how to efficiently compute $y_0(u,b)$.
For $p\in \{0\}\cup [k]$, 
an integer $p_\in$, 
an integer $p_=$, 
and $b'\in \{0\}\cup [b]$, 
let $M(p,p_\in,p_=,b')$ be defined as the maximum of the expression
$$\sum\limits_{i\in [p]:c_i=1}y_\in(v_i,b_i)
+\sum\limits_{i\in [p]:c_i=0}y_1(v_i,b_i)$$
maximized over all
$\vec{b}=(b_1,\ldots,b_p)\in {\cal P}_k(b')$
and
$\vec{c}=(c_1,\ldots,c_p)\in \{ 0,1\}^p$
with $\vec{b}\geq \vec{b}$ such that 
\begin{eqnarray*}
p_\in&=&c_1+\ldots+c_p\mbox{ and}\\
p_=&=&\Big|\Big\{ i\in [p]:c_i=0\mbox{ and }y_0(v_i,b_i)=y_1(v_i,b_i)\Big\}\Big|.
\end{eqnarray*}
As usual, if no such pair $\vec{b}$ and $\vec{c}$ exists, 
then $M(p,p_\in,p_=,b')=-\infty$.

Clearly, $M(0,0,0,b')=0$.

For $p\in [k]$, the value of $M(p,p_\in,p_=,b')$ 
is the maximum of the following three values:
\begin{itemize}
\item The maximum of 
$$M(p-1,p_\in-1,p_=,b_{\leq p-1})+y_\in(v_p,b_p)$$
over all $(b_{\leq p-1},b_p)\in {\cal P}_2(b')$,
\item the maximum of 
$$M(p-1,p_\in,p_=-1,b_{\leq p-1})+y_1(v_p,b_p)$$
over all $(b_{\leq p-1},b_p)\in {\cal P}_2(b')$
with $y_0(v_p,b_p)=y_1(v_p,b_p)$, and
\item the maximum of 
$$M(p-1,p_\in,p_=,b_{\leq p-1})+y_1(v_p,b_p)$$
over all $(b_{\leq p-1},b_p)\in {\cal P}_2(b')$
with $y_0(v_p,b_p)>y_1(v_p,b_p)$.
\end{itemize}
This implies that the values 
$M(k,p_\in,p_=,b)$ for all $p_\in,p_=\in \{ 0\}\cup [k]$ 
can be determined in $O(k^3b)$ time.
By the definition of $\delta_j\left(\vec{b},\vec{c}\right)$,
the value of $y_0(u,b)$ equals the maximum of the two expressions
$$1+\max\Big\{ M(k,p_\in,p_=,b):
p_\in\in \{ 0\}\cup [k]\mbox{ and }
p_=\in \{ 0\}\cup [\tau(u)-1]\Big\}$$
and
$$\max\Big\{ M(k,p_\in,p_=,b):
p_\in\in \{ 0\}\cup [k]\mbox{ and }
p_=\in [k]\setminus [\tau(u)-1]\Big\},$$
which completes the proof.
\end{proof}
We proceed to our second main result.

\begin{theorem}\label{theorem1b}
For a given triple $(T,\tau,b)$,
where $T$ is a tree of order $n$,
$\tau$ is a threshold function for $T$,
and $b$ is a non-negative integer at most $n$,
the value of ${\rm vacc}_2(T,\tau,b)$
as well as a set $Y$ in ${V(T)\choose b}$ 
with ${\rm vacc}_2(T,\tau,b)={\rm dyn}\left(T-Y,\tau\right)$
can be determined in $O(n^3b^2)$ time.
\end{theorem}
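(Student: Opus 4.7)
My plan is to mirror the proof of Theorem \ref{theorem1}, leveraging the recursions already established in Lemma \ref{lemma2b1} (for $y_\in$), Lemma \ref{lemma2b2} (for $y_0$ and $y_1$), and the per-vertex dynamic-programming routine of Lemma \ref{lemma3b}. Rooting $T$ at $r$, I would process the vertices in post-order; at each vertex $u$ and each budget $b'\in\{0\}\cup[b]$, I would compute the three target quantities $y_\in(u,b')$, $y_0(u,b')$, $y_1(u,b')$.

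Initialization at leaves is handled by Lemma \ref{lemma1b} in constant time. For an internal vertex $u$ with $k:=d_T(u)$ children, once the children's values are available, Lemma \ref{lemma3b} produces the three target quantities for a single budget $b$ in time $O(k^3 b)$; running this (equivalently, filling the full table $M(p,p_\in,p_=,b')$) for all $b'\in\{0\}\cup[b]$ requires $O(k^3 b^2)$ time at $u$. Summing over all vertices yields total running time
\[
O\!\left(\sum_{u\in V(T)}d_T(u)^3\,b^2\right)\leq O\!\left(\max_{u\in V(T)}d_T(u)^2\cdot\sum_{u\in V(T)}d_T(u)\cdot b^2\right)=O(n^3 b^2),
\]
using the trivial bounds $\max_u d_T(u)\leq n-1$ and $\sum_u d_T(u)=2(n-1)$.

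By equation (\ref{ev2}), the root value is ${\rm vacc}_2(T,\tau,b)=\max\{y_\in(r,b),y_0(r,b)\}$, which settles the value part of the statement. To produce a witnessing set $Y$, I would store, alongside each $M$-table entry (from Lemma \ref{lemma3b}) and each maximizing partition (from Lemma \ref{lemma2b1}), a suitable argmax. Corollary \ref{corollary1b} guarantees that $Y_0(u,b')$ and $Y_1(u,b')$ may be chosen equal at every vertex, so that the recursive prescriptions for $Y_\in$ (after Lemma \ref{lemma2b1}) and for $Y_0$ (after Corollary \ref{corollary1b}) stitch together consistently in a top-down traversal that reconstructs $Y$.

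The step most worthy of attention is the running-time bookkeeping: the extra index $p_\in\in\{0\}\cup[k]$ introduced in Lemma \ref{lemma3b}'s dynamic program (absent from Lemma \ref{lemma3}) is exactly what lifts the per-vertex sum from $\sum_u d_T(u)^2$ to $\sum_u d_T(u)^3$, and hence the final bound from $O(n^2b^2)$ to $O(n^3b^2)$. One must verify that no additional overhead enters through the new quantity $y_\in$ (but Lemma \ref{lemma2b1} already computes it by a standard one-dimensional knapsack of complexity $O(kb^2)$, safely absorbed into $O(k^3 b^2)$), and that reconstructing $Y$ requires only constant-factor bookkeeping per table entry. Everything else is mechanical.
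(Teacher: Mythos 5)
Your proposal is correct and matches the paper's own proof essentially step for step: post-order computation of $y_\in$, $y_0$, $y_1$ via Lemmas \ref{lemma1b} and \ref{lemma3b}, the bound $O\big(\sum_u d_T(u)^3 b^2\big)=O(n^3b^2)$, the value via (\ref{ev2}), and reconstruction of $Y$ from the remarks after Lemma \ref{lemma2b1} and Corollary \ref{corollary1b}. The only cosmetic difference is that the paper bounds $\sum_u d_T(u)^3$ by the folklore estimate $(n-1)\left((n-1)^2+1\right)$ rather than by your $\max_u d_T(u)^2\cdot\sum_u d_T(u)$; both give $O(n^3)$.
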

\begin{proof}
Given $(T,\tau,b)$,
Lemma \ref{lemma1b} and Lemma \ref{lemma3b} imply
that the values 
$y_\in(u,b')$,
$y_0(u,b')$, and
$y_1(u,b')$
for all $u\in V(T)$ and all $b'\in \{ 0\}\cup [b]$
can be computed in time
$O\Big(\sum\limits_{u\in V(T)}d_T(u)^3b^2\Big)$.
It is a simple folklore exercise that $\sum\limits_{u\in V(T)}d_T(u)^3\leq (n-1)\left((n-1)^2+1\right)$
for every tree $T$ of order $n$, 
which implies the statement about the running time.
The statement about the value of ${\rm vacc}_2(T,\tau,b)$ follows
from (\ref{ev2}). 
Finally, the statement about the set $Y$
follows easily from the remarks after 
Lemma \ref{lemma2b1}
and 
Corollary \ref{corollary1b}.
\end{proof}

\end{document}